\newtheorem{thm}{Theorem}[section]
\newtheorem*{thmA}{Theorem A}
\newtheorem*{thmB}{Theorem B}
\newtheorem*{thmC}{Theorem C}
\newtheorem{cor}[thm]{Corollary}
\newtheorem{lem}[thm]{Lemma}
\newtheorem{prop}[thm]{Proposition}
\theoremstyle{definition}
\newtheorem*{ack}{Acknowledgments}
\theoremstyle{remark}
\numberwithin{equation}{section}
\renewcommand{\(}{\left(}
\renewcommand{\)}{\right)}
\renewcommand{\~}{\widetilde}
\renewcommand{\-}{\overline}
\renewcommand{\a}{\alpha}
\renewcommand{\d}{\delta}
\newcommand{\e}{\varepsilon}
\renewcommand{\k}{\kappa}
\renewcommand{\l}{\lambda}
\newcommand{\D}{\Delta}
\newcommand{\s}{\sigma}
\newcommand{\G}{\Gamma}
\renewcommand{\l}{\lambda}
\newcommand{\ra}{\rightarrow}
\newcommand{\vol}{\operatorname{vol}}
\newcommand{\Vol}{\operatorname{Vol}}
\newcommand{\divv}{\operatorname{div}}
\begin{document}
\title[A Heintze-Karcher type inequality in hyperbolic space]{A Heintze-Karcher type inequality in hyperbolic space}
\author[Y. Hu]{Yingxiang Hu}
\address{School of Mathematical Sciences, Beihang University, Beijing 100191, P.R. China}
\email{\href{mailto:huyingxiang@buaa.edu.cn}{huyingxiang@buaa.edu.cn}}
\author[Y. Wei]{Yong Wei}
\author[T. Zhou]{Tailong Zhou}
\address{School of Mathematical Sciences, University of Science and Technology of China, Hefei 230026, P.R. China}
\email{\href{mailto:yongwei@ustc.edu.cn}{yongwei@ustc.edu.cn}}
\email{\href{mailto:ztl20@ustc.edu.cn}{ztl20@ustc.edu.cn}}

\keywords{Heintze-Karcher inequality, shifted principal curvatures, unit normal flow, hyperbolic space}

\begin{abstract}
In this paper, we prove a new Heintze-Karcher type inequality for shifted mean convex hypersurfaces in hyperbolic space. As applications, we prove an Alexandrov type theorem for closed embedded hypersurfaces with constant shifted $k$th mean curvature in hyperbolic space. Furthermore, a uniqueness result for $h$-convex hypersurfaces satisfying certain curvature equations is obtained.
\end{abstract}

\maketitle

\section{Introduction}\label{sec:1}
The classical Alexandrov's theorem \cite{Alexandrov1956} states that any closed, embedded hypersurface in Euclidean space with constant mean curvature is a round sphere. Alexandrov's method is based on the maximum principle for elliptic equations. This result essentially improves previous results due to S\"uss \cite{Suss1952} and Hsiung \cite{Hsiung1954}, which require the additional assumption of convexity or star-shapedness on the hypersurface. Later, Reilly \cite{Reilly1977} provided a new proof of the Alexandrov's theorem by using his famous integral formula. Another proof of Alexandrov's theorem was also provided by Montiel and Ros \cite{Montiel-Ros1991}, by combining the Minkowski formulae with the following Heintze-Karcher inequality.
\begin{thmA}
	Let $\Omega$ be a bounded domain with smooth boundary $\Sigma=\partial \Omega$ in Euclidean space $\mathbb R^{n+1}$. Assume that the mean curvature $H$ of $\Sigma$ is positive, then
	\begin{align}\label{s1:Heintze-Karcher-original}
	\int_{\Sigma} \frac{1}{H} d\mu  \geq \frac{n+1}{n}\Vol(\Omega).
	\end{align}
	Equality holds in \eqref{s1:Heintze-Karcher-original} if and only if $\Sigma$ is umbilic.
\end{thmA}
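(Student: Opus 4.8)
The plan is to prove \eqref{s1:Heintze-Karcher-original} by the parallel-hypersurface (unit normal flow) argument of Montiel--Ros, which also sets up the equality discussion. Let $\nu$ be the outward unit normal of $\Sigma=\partial\Omega$, let $\kappa_1(x),\dots,\kappa_n(x)$ be the principal curvatures and $\kappa_{\max}(x)$ the largest one, and define the inward normal map
\[
\Phi\colon\Sigma\times[0,\infty)\to\mathbb R^{n+1},\qquad \Phi(x,t)=x-t\,\nu(x).
\]
Since $H=\kappa_1+\dots+\kappa_n>0$ on $\Sigma$ we have $\kappa_{\max}\ge H/n>0$ everywhere, so the set
\[
Z=\Bigl\{(x,t)\in\Sigma\times[0,\infty):0\le t\le\tfrac1{\kappa_{\max}(x)}\Bigr\}
\]
is compact. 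First I would show the inclusion $\overline\Omega\subseteq\Phi(Z)$: given $p\in\Omega$, choose $x\in\Sigma$ realizing $\operatorname{dist}(p,\Sigma)$ and set $t=|p-x|$; then the segment $\overline{xp}$ meets $\Sigma$ orthogonally, so $p=\Phi(x,t)$, and since the open ball $B(p,t)\subset\Omega$ is internally tangent to $\Sigma$ at $x$, the principal curvatures of $\Sigma$ at $x$ are dominated by those of $\partial B(p,t)$, i.e.\ $t\,\kappa_i(x)\le 1$ for all $i$ (equivalently, the normal geodesic from $x$ to $p$ has no interior focal point of $\Sigma$); hence $(x,t)\in Z$. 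Together with $\Sigma=\Phi(\Sigma\times\{0\})$ this gives the inclusion.

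Next I would apply the area formula to the Lipschitz map $\Phi|_Z$. Differentiating $\Phi$ along $\Sigma$ by the Weingarten map and in $t$, the Jacobian is $\prod_{i=1}^n|1-t\kappa_i(x)|$, which on $Z$ equals $\prod_{i=1}^n(1-t\kappa_i(x))\ge0$; since the multiplicity of $\Phi|_Z$ is at least $1$ on $\Omega$,
\[
\Vol(\Omega)\ \le\ \int_{\Sigma}\int_0^{1/\kappa_{\max}(x)}\prod_{i=1}^n\bigl(1-t\kappa_i(x)\bigr)\,dt\,d\mu(x).
\]
Then I would estimate the inner integral pointwise. As long as each factor $1-t\kappa_i$ is nonnegative, the AM--GM inequality gives
\[
\prod_{i=1}^n(1-t\kappa_i)\ \le\ \Bigl(\tfrac1n\sum_{i=1}^n(1-t\kappa_i)\Bigr)^n=\Bigl(1-\tfrac tn H\Bigr)^n,
\]
and $H\le n\kappa_{\max}$ lets me enlarge the range to $t\in[0,n/H]$, on which $1-tH/n\ge0$; a one-line substitution gives $\int_0^{n/H}(1-tH/n)^n\,dt=\tfrac n{(n+1)H}$. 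Therefore $\Vol(\Omega)\le\frac n{n+1}\int_\Sigma H^{-1}\,d\mu$, which is \eqref{s1:Heintze-Karcher-original}.

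Finally, for rigidity: equality forces equality in AM--GM for $\mu$-a.e.\ $x$, hence $\kappa_1(x)=\dots=\kappa_n(x)$ everywhere by continuity, so $\Sigma$ is totally umbilic and thus a round sphere; conversely, a direct computation shows every round sphere attains equality. The hard part is making the first step rigorous — showing that $\Phi|_Z$ genuinely covers $\overline\Omega$ and that the area formula applies despite $\kappa_{\max}$ being only continuous (so $Z$ has a merely Lipschitz ``roof''). This is handled by the standard nearest-point projection / cut-locus analysis, using that $\kappa_{\max}$ is continuous, $Z$ is compact, and $\Phi$ is smooth, so that $\Phi|_Z$ is Lipschitz and the area formula for Lipschitz maps is available.
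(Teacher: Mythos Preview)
Your proposal is correct and is precisely the Montiel--Ros parallel-hypersurface argument that the paper cites for Theorem~A; the paper does not give its own proof of this classical result but attributes it to \cite{Montiel-Ros1991,Heintze-Karcher1978} via the unit normal flow, which is exactly what you have written out.
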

The proof of Theorem A is inspired by a direct integral method due to Heintze and Karcher \cite{Heintze-Karcher1978}, using the unit normal flow in Euclidean space. Montiel and Ros \cite{Montiel-Ros1991} also proved an analogue of the Alexandrov's theorem in hyperbolic space and in the open hemisphere. However, it remains unknown for a long time whether the Heintze-Karcher inequality \eqref{s1:Heintze-Karcher-original} can be generalized to space forms. A breakthrough in this direction was made by Brendle \cite{Bre13}, who utilized the unit normal flow with respect to a conformal metric to establish the Heintze-Karcher type inequality in certain warped product manifolds. In particular, Brendle proved the following result.
\begin{thmB}\cite{Bre13}
	Let $\Omega$ be a bounded domain with smooth boundary $\Sigma=\partial \Omega$ in hyperbolic space $\mathbb H^{n+1}$. Fix a point $o\in \mathbb H^{n+1}$ and $V=\cosh r(x)$, where $r(x)=d(x,o)$ is the distance to $o$. Assume that the mean curvature $H$ of $\Sigma$ is positive, then
	\begin{align}\label{s1:Heintze-Karcher}
	\int_{\Sigma} \frac{V}{H} d\mu  \geq \frac{n+1}{n}\int_{\Omega} V d\vol.
	\end{align}
	Equality holds in \eqref{s1:Heintze-Karcher} if and only if $\Sigma$ is umbilic.
\end{thmB}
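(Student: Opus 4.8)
The plan is to run the Heintze--Karcher--Montiel--Ros integral scheme directly in $\mathbb H^{n+1}$, in the spirit of Brendle's approach (which instead uses the unit normal flow of a conformally modified metric). The two facts about $V=\cosh r$ that drive everything are that $V=h'(r)$ with $h(r)=\sinh r$ the warping function of $\mathbb H^{n+1}=\big([0,\infty)\times\mathbb S^n,\,dr^2+\sinh^2 r\,g_{\mathbb S^n}\big)$ over $o$, and that $\nabla^2V=V\,g$ --- hence $\Delta V=(n+1)V$, $|\nabla V|^2=V^2-1$, and the restriction of $V$ to any unit--speed geodesic solves $y''=y$.

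\textbf{Step 1 (normal flow; reduction to a one--dimensional integral).} For $x\in\Sigma$ let $\nu(x)$ be the outward unit normal, $\gamma_x(t)=\exp_x(-t\nu(x))$ the inward geodesic, and $T(x)$ the distance from $x$ to the cut locus of $\Sigma$ along $\gamma_x$. Since every point of $\Omega$ is joined to a nearest point of $\Sigma$ by a geodesic meeting $\Sigma$ orthogonally, the map $\Phi(x,t)=\gamma_x(t)$ restricts to a diffeomorphism of $\{0\le t<T(x)\}$ onto $\Omega$ minus its (measure zero) cut locus, and $T(x)$ does not exceed the first focal distance of $\Sigma$ along $\gamma_x$. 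Computing Jacobi fields in constant curvature $-1$, with $\kappa_1(x)\le\cdots\le\kappa_n(x)$ the principal curvatures of $\Sigma$ at $x$, one finds $\Phi^\ast(d\vol)=\prod_{i=1}^n(\cosh t-\kappa_i(x)\sinh t)\,dt\wedge d\mu$, all factors $\cosh t-\kappa_i\sinh t$ being positive on $[0,T(x))$. Therefore
\[\int_\Omega V\,d\vol=\int_\Sigma\Big(\int_0^{T(x)}V(\gamma_x(t))\prod_{i=1}^n\big(\cosh t-\kappa_i(x)\sinh t\big)\,dt\Big)\,d\mu(x).\]

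\textbf{Step 2 (integrand along a geodesic; AM--GM).} Since $\nabla^2V=Vg$, one has $V(\gamma_x(t))=V(x)\cosh t-\langle\nabla V,\nu\rangle\sinh t\,(>0)$ along $\gamma_x$, with $|\langle\nabla V,\nu\rangle|<V$. Applying the arithmetic--geometric mean inequality to the positive numbers $\cosh t-\kappa_i\sinh t$ gives, for $0\le t<T(x)$,
\[V(\gamma_x(t))\prod_{i=1}^n\big(\cosh t-\kappa_i\sinh t\big)\ \le\ \big(V(x)\cosh t-\langle\nabla V,\nu\rangle\sinh t\big)\Big(\cosh t-\tfrac Hn\sinh t\Big)^n.\]

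\textbf{Step 3 (core estimate; the main obstacle).} By Steps 1--2 it suffices to prove
\[\int_\Sigma\Big(\int_0^{T(x)}\big(V(x)\cosh t-\langle\nabla V,\nu\rangle\sinh t\big)\big(\cosh t-\tfrac Hn\sinh t\big)^n\,dt\Big)d\mu(x)\ \le\ \frac n{n+1}\int_\Sigma\frac VH\,d\mu.\]
Along each $\gamma_x$ the integrand is a product of solutions of $\partial_t^2-1=0$, so the inner integral has a closed form; at a point where $H(x)>n$ one moreover has $T(x)\le\operatorname{arccoth}(H/n)$ and the integrand stays non-negative up to the zero of $\cosh t-\tfrac Hn\sinh t$, so the inner integral is bounded by its value over $[0,\operatorname{arccoth}(H/n)]$, which I expect to evaluate to $\tfrac n{n+1}\tfrac{V(x)}{H(x)}$ plus an error of the shape $c(x)\big(nV-H\langle\nabla V,\nu\rangle\big)$ with $c(x)\ge0$. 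One then integrates over $\Sigma$ and uses the Minkowski identity for the conformal Killing field $\nabla V$ (potential $V$), namely $nV-H\langle\nabla V,\nu\rangle=\divv_\Sigma\big((\nabla V)^{\top}\big)$ and hence $\int_\Sigma(nV-H\langle\nabla V,\nu\rangle)\,d\mu=0$, integrating by parts on $\Sigma$ to handle the variable coefficient $c(x)$, so as to absorb the error. This step is the crux, and it is genuinely harder than in Euclidean space: there $V\equiv1$ and every inward normal geodesic reaches a focal point at $1/\kappa_{\max}\le n/H$, so the estimate is purely pointwise; in $\mathbb H^{n+1}$, $\gamma_x$ has a focal point only when $\kappa_{\max}(x)>1$, so when $\kappa_{\max}(x)\le1$ (in particular $H(x)\le n$) there is no such cutoff, and --- $V$ being non-constant --- no pointwise estimate can hold (already for geodesic spheres not centred at $o$ the error above fails to be sign-definite at a given point while integrating to zero). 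Reconciling the no-focal-point range with the non-constancy of $V$ is precisely where the warped-product (or conformal) structure must be exploited, presumably through a monotone quantity along the flow; this is the part I would expect to be delicate.

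\textbf{Step 4 (equality).} Equality forces equality in Step 2 for a.e.\ $x$, i.e.\ $\Sigma$ is totally umbilic; by Codazzi its principal curvature is then constant, so $\Sigma$, being closed, is a geodesic sphere. Conversely a geodesic sphere gives equality, by the computations of Steps 1--3 together with the Minkowski formula.
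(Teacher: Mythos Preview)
Theorem B is quoted from \cite{Bre13}; the present paper does not reprove it, so there is no ``paper's own proof'' to compare against. The paper does, however, prove its new inequality (Theorem~\ref{s1:main-thm}) by precisely the mechanism you gesture at in the last sentence of Step~3: a monotone quantity along the (hyperbolic) unit normal flow, closed up using the Minkowski identity on the level sets $\Sigma_t^\ast$. Brendle's original proof of Theorem~B is of the same flavour but runs the unit normal flow for the \emph{conformal} metric $V^{-2}\bar g$, which is what makes the monotonicity clean under the sole hypothesis $H>0$.

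Your proposal, as it stands, has a real gap in Step~3 --- and you essentially say so. Two concrete failures:

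\emph{(i) The regime $0<H(x)\le n$.} Here $\cosh t-\tfrac{H}{n}\sinh t$ never vanishes, so there is no focal cutoff to truncate the inner integral, and your ``closed form up to $\operatorname{arccoth}(H/n)$'' is undefined. Theorem~B assumes only $H>0$, so this regime cannot be ignored.

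\emph{(ii) Even where $H(x)>n$,} your expected decomposition is correct but does not close. Writing $a=H/n$, $g(t)=\cosh t-a\sinh t$, $T_0=\operatorname{arccoth}a$, one finds
\[
\int_0^{T_0}\big(V\cosh t-V_{,\nu}\sinh t\big)g(t)^n\,dt=\frac{n}{n+1}\frac{V}{H}+c(x)\,(nV-HV_{,\nu}),
\]
with $c(x)=\dfrac{1}{n(a^2-1)}\Big(\dfrac{1}{(n+1)a}-\int_0^{T_0}g^{n+1}\,dt\Big)$, which depends on $x$ through $a=H(x)/n$. The Minkowski identity only gives $\int_\Sigma(nV-HV_{,\nu})\,d\mu=0$; with the variable weight $c(x)$ you get, after integrating by parts,
\[
\int_\Sigma c(x)(nV-HV_{,\nu})\,d\mu=-\int_\Sigma\big\langle \nabla^\Sigma c,(\bar\nabla V)^\top\big\rangle\,d\mu,
\]
which has no sign in general. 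So the ``absorb the error'' step does not go through as written.

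In short, the Montiel--Ros scheme you set up is the right picture in Euclidean space but is known not to close pointwise in $\mathbb H^{n+1}$; the remedy is exactly the conformal/monotone-quantity device you allude to but do not implement. If you want a template, the proof of Theorem~\ref{s1:main-thm} in Section~\ref{sec:3} shows how the monotone quantity (there $Q(t)=e^{(n+1)t}[\,\cdots\,]$) is built so that the Minkowski identity on $\Sigma_t^\ast$ kills the derivative for a.e.\ $t$, bypassing any pointwise estimate on $\Sigma$.
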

This inequality \eqref{s1:Heintze-Karcher} was also proved later using the generalized Reilly's formula, see \cite{Qiu-Xia2015,Li-Xia2019}.

The main purpose of this paper is to establish the following new Heintze-Karcher type inequality for hypersurfaces with mean curvature $H>n$ in hyperbolic space.
\begin{thm}\label{s1:main-thm}
	Let $\Omega$ be a bounded domain with smooth boundary $\Sigma=\partial \Omega$ in hyperbolic space $\mathbb H^{n+1}$. Fix a point $o\in \mathbb H^{n+1}$ and $V(x)=\cosh r(x)$, where $r(x)=d(o,x)$ is the distance to this point $o$. Assume that the mean curvature of $\Sigma=\partial\Omega$ satisfies $H>n$, then
	\begin{align}\label{s1:shifted-Heintze-Karcher}
	\int_{\Sigma} \frac{V-V_{,\nu}}{H-n} d\mu  \geq \frac{n+1}{n}\int_{\Omega} V d\vol,
	\end{align}
	where $V_{,\nu}=\langle \-\nabla V,\nu\rangle=\langle \sinh r \partial_r,\nu \rangle$ is the support function of $\Sigma$ and $\nu$ denotes the unit outward normal of $\Sigma$. Equality holds in \eqref{s1:shifted-Heintze-Karcher} if and only if $\Sigma$ is umbilic.
\end{thm}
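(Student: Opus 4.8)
\emph{Strategy.} The plan is to run Brendle's unit–normal–flow argument, now with the shifted principal curvatures $\lambda_i:=\kappa_i-1$ playing the role that the $\kappa_i$ play in Theorem~B, and using that $V=\cosh r$ is concircular, $\-\nabla^2V=V\-g$. Consider the inward normal exponential map $\Phi\cn\Sigma\times[0,\infty)\ra\mathbb H^{n+1}$, $\Phi(x,t)=\exp_x(-t\nu(x))$. Since $V$ restricted to a unit–speed geodesic solves $f''=f$ with $f(0)=V(x)$, $f'(0)=-V_{,\nu}(x)$, we get $V(\Phi(x,t))=V(x)\cosh t-V_{,\nu}(x)\sinh t$. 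Differentiating $\Phi$ in the directions tangent to $\Sigma$ gives $d\Phi=\cosh t\,\mathrm{Id}-\sinh t\,\mathcal W$, where $\mathcal W$ is the shape operator with respect to $\nu$, so the Jacobian of $\Phi$ equals $\prod_{i=1}^n(\cosh t-\kappa_i\sinh t)=\prod_{i=1}^n(e^{-t}-\lambda_i\sinh t)$, where $\kappa_1,\dots,\kappa_n$ are the principal curvatures and $\sum_i\lambda_i=H-n$.

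\emph{Covering and area formula.} Given $p\in\Omega$, choose a nearest point $x\in\Sigma$ and set $t=d(p,\Sigma)$; comparing $\Sigma$ at $x$ with the internally tangent geodesic sphere $\partial B_t(p)\subset\Omega$ gives $\kappa_i(x)\le\coth t$ for all $i$, hence $\cosh s-\kappa_i(x)\sinh s\ge0$ for all $s\le t$ and $t\le\operatorname{arctanh}(1/\kappa_{\max}(x))$. Because $H>n$ forces $\kappa_{\max}\ge H/n>1$, one has $\operatorname{arctanh}(1/\kappa_{\max}(x))\le\operatorname{arctanh}(n/H(x))=:T_0(x)$. Letting $T(x)$ be the minimum of the cut distance of $\Sigma$ at $x$ and $\operatorname{arctanh}(1/\kappa_{\max}(x))$, the restriction of $\Phi$ to $\{0\le t\le T(x)\}$ is a.e.\ injective onto $\Omega$ with nonnegative Jacobian and $T(x)\le T_0(x)$, so the area formula yields
\begin{align*}
\int_\Omega V\,d\vol=\int_\Sigma\int_0^{T(x)}\bigl(V\cosh t-V_{,\nu}\sinh t\bigr)\prod_{i=1}^n(\cosh t-\kappa_i\sinh t)\,dt\,d\mu.
\end{align*}

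\emph{AM--GM and the model integral.} Since every factor $e^{-t}-\lambda_i\sinh t$ is $\ge0$ on $[0,T_0(x)]$, the AM--GM inequality gives $\prod_i(e^{-t}-\lambda_i\sinh t)\le\hat w(t)^n$ with $\hat w(t):=e^{-t}-\tfrac{H-n}{n}\sinh t=\cosh t-\tfrac Hn\sinh t$; using $V\cosh t-V_{,\nu}\sinh t\ge Ve^{-t}\ge0$ and enlarging the $t$–integral to $[0,T_0(x)]$, one is reduced to estimating $\int_\Sigma\bigl(\int_0^{T_0(x)}(V\cosh t-V_{,\nu}\sinh t)\hat w(t)^n\,dt\bigr)d\mu$. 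Using the identities $\hat w'+\hat w=-\tfrac{H-n}{n}e^t$, $\hat w-\hat w'=\tfrac{H+n}{n}e^{-t}$, $\hat w(0)=1$, $\hat w(T_0)=0$, the inner integral is evaluated exactly by parts as
\begin{align*}
\frac{n}{n+1}\cdot\frac{V-V_{,\nu}}{H-n}\;+\;\Bigl(\tfrac1{n+1}+J(H)\Bigr)\cdot\frac{n\,(HV_{,\nu}-nV)}{H^2-n^2},\qquad J(H):=\int_0^{T_0}\hat w^{\,n+1}\,dt>0 .
\end{align*}

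\emph{Main obstacle and equality.} Everything thus comes down to proving $\int_\Sigma\bigl(\tfrac1{n+1}+J(H)\bigr)\tfrac{HV_{,\nu}-nV}{H^2-n^2}\,d\mu\le0$, and this is the heart of the matter: it is not a pointwise estimate (it already fails on round spheres not centred at $o$), so a global argument is essential. The key input I would use is the Hsiung--Minkowski identity $\int_\Sigma(nV-HV_{,\nu})\,d\mu=0$, which follows from $\divv_\Sigma(X^\top)=nV-HV_{,\nu}$ for the conformal field $X=\sinh r\,\partial_r$ (for which $\-\nabla_YX=VY$): since $(\tfrac1{n+1}+J(H))/(H^2-n^2)$ is a positive function of $H$ alone, one plays this identity against the monotone behaviour of that weight. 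Equivalently, and more robustly, one does not enlarge the $t$–integral: keeping $T(x)$ produces in the inner integral an extra, manifestly nonpositive boundary term $-\tfrac{\hat w(T(x))^{n+1}}{n+1}\cdot\tfrac{n(HV-nV_{,\nu})}{H^2-n^2}$, which is what one exploits to absorb the remainder. Finally, tracking equalities: equality forces equality in the AM--GM step for a.e.\ $x$, hence $\kappa_1=\cdots=\kappa_n$ everywhere, so $\Sigma$ is totally umbilic; the converse is a direct computation on geodesic spheres.
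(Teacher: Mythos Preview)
Your computation of the inner $t$–integral is correct, and you have honestly flagged the obstacle: after AM--GM and the integration by parts you are left with
\[
\int_\Sigma \Bigl(\tfrac{1}{n+1}+J(H)\Bigr)\,\frac{n\bigl(HV_{,\nu}-nV\bigr)}{H^2-n^2}\,d\mu,
\]
and you need this to be $\le 0$. Here the proposal breaks down. The Hsiung--Minkowski identity on $\Sigma$ only gives $\int_\Sigma (HV_{,\nu}-nV)\,d\mu=0$; it says nothing about $\int_\Sigma \phi(H)\,(HV_{,\nu}-nV)\,d\mu$ when the positive weight $\phi(H)=\bigl(\tfrac{1}{n+1}+J(H)\bigr)/(H^2-n^2)$ varies with $H$. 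Your phrase ``plays this identity against the monotone behaviour of that weight'' would need a Chebyshev-type correlation inequality, which requires $HV_{,\nu}-nV$ to be a monotone function of $H$ alone---and it is not (it also depends on the position through $V,V_{,\nu}$). The fallback of keeping $T(x)$ instead of $T_0(x)$ does not help either: it only contributes the additional nonpositive term $\tfrac{q}{n+1}\hat w(T(x))^{n+1}\le 0$, while the problematic term $\bigl(\tfrac{1}{n+1}+J'\bigr)p$ (now with $J'=\int_0^{T(x)}\hat w^{n+1}\,dt$) remains of indeterminate sign and is not ``absorbed'' by that boundary piece.

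The paper's proof avoids this precisely by \emph{not} integrating in $t$ first. It works with the level sets $\Sigma_t^\ast=\{v=t\}\setminus C$ of the distance function and the monotone quantity
\[
Q(t)=e^{(n+1)t}\Bigl[\int_{\Sigma_t^\ast}\frac{V-V_{,\nu}}{H-n}\,d\mu-\frac{n+1}{n}\int_{\{v>t\}}V\,d\vol\Bigr].
\]
Differentiating $Q$ uses two ingredients: the pointwise differential inequality $\partial_t\bigl(\tfrac{V-V_{,\nu}}{H-n}\bigr)\le-\tfrac{H}{n}\tfrac{V-V_{,\nu}}{H-n}$ on $\Sigma_t^\ast$ (from $|h|^2\ge H^2/n$ and $\partial_t(V-V_{,\nu})=V-V_{,\nu}$), and---crucially---the Minkowski-type identity
\[
\int_{\Sigma_t^\ast}V_{,\nu}\,d\mu=(n+1)\int_{\{v>t\}}V\,d\vol\qquad\text{for a.e. }t,
\]
which is the divergence theorem $\int\overline\Delta V=\int V_{,\nu}$ applied on the super-level set $\{v>t\}$ (the cut locus has $\mathcal H^n$-measure zero, so the boundary is a.e.\ $C^1$). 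These two facts combine to give $Q(0)\ge Q(t)$, and $\liminf_{t\to T}Q(t)\ge 0$ finishes the inequality. In other words, the Minkowski identity is invoked at \emph{every} time $t$, not just on the initial $\Sigma$; this is exactly what kills the remainder that your $x$-by-$x$ integration cannot control. If you swap the order of integration in your scheme and apply the divergence identity on each $\{v>t\}$ before integrating in $t$, you will in effect recover the paper's argument.
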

For the curve case (i.e., $n=1$), the inequality \eqref{s1:shifted-Heintze-Karcher} was proved by Li and Xu \cite[Proposition 8.2]{Li-Xu2022}. However, it seems difficult to generalize their proof to the hypersurfaces in $\mathbb H^{n+1}$ $(n\geq 2)$. Our proof of Theorem \ref{s1:main-thm} relies on the unit normal flow, whose solution is a family of level sets $\{v=t\}$. Here $v(x)=d(x,\Sigma)$ is the distance  from $x\in\Omega$ to the hypersurface $\Sigma$. Since the distance function $v$ is merely Lipschitz continuous, we define the following geometric quantity
$$
Q(t)=e^{(n+1)t}\left[\int_{\Sigma_t^\ast}\frac{V-V_{,\nu}}{H-n}d\mu-\frac{n+1}{n}\int_{\{v>t\}} V d\vol\right].
$$
where $\Sigma_t^\ast$ is the smooth part of $\{v=t\}$ and $\{v>t\}$ is the super-level set.  Combining with the Minkowski type formula on $\Sigma_t^\ast$ for a.e. $t\in (0,T)$ (Lemma \ref{s3:prop-identity}), we prove that $Q(0)\geq Q(t)$ for all $t\in (0,T)$ (Proposition \ref{s3:monotonicity}). Finally, Theorem \ref{s1:main-thm} follows from the asymptotic estimate $\liminf_{t\ra T} Q(t)\geq 0$.

Next, we will provide two applications of the Heintze-Karcher type inequality \eqref{s1:shifted-Heintze-Karcher} in Theorem \ref{s1:main-thm}.   For this, we recall the definition of shifted $k$th mean curvature. Given a smooth hypersurface $\Sigma$ in $\mathbb{H}^{n+1}$, we denote its principal curvatures as $\kappa=(\kappa_1,\cdots,\kappa_n)$. The shifted principal curvatures are defined as $\tilde{\kappa}=(\~\k_1,\cdots,\~\k_n)=(\k_1-1,\cdots,\k_n-1)$. Then the shifted $k$th mean curvature $E_k(\~{\k})$ of $\Sigma$ is defined as the normalized $k$th elementary symmetric function of $\tilde{\kappa}$. These definitions arise naturally in the context of horospherically convex ($h$-convex) geometry of hypersurfaces in $\mathbb{H}^{n+1}$ (see \cite{ACW2021,Esp09}).

The first application of Theorem \ref{s1:main-thm} is an Alexandrov-type theorem for smooth hypersurface with constant shifted $k$th mean curvature in hyperbolic space.
\begin{thm}\label{s1:thm-application-1}
	Let $\Sigma$ be a smooth closed and embedded hypersurface in $\mathbb{H}^{n+1}$. Let $k\in \{2,\cdots,n\}$. If the shifted $k$th mean curvature $E_k(\~{\k})$ is constant, then it is a geodesic sphere.
\end{thm}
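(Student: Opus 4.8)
The plan is to combine the new Heintze--Karcher inequality \eqref{s1:shifted-Heintze-Karcher} with a suitable Minkowski-type formula, in the spirit of the Montiel--Ros argument for Theorem A, but adapted to the shifted setting. First I would record the classical Minkowski formulae in $\mathbb H^{n+1}$: for a closed hypersurface $\Sigma=\partial\Omega$ one has, for each $j$,
\begin{align*}
\int_\Sigma \bigl( V\, E_{j-1}(\kappa) - V_{,\nu}\, E_j(\kappa) \bigr)\, d\mu = 0,
\end{align*}
with $E_0\equiv 1$. Rewriting these in terms of the shifted curvatures $\~\k=\k-1$ — using the binomial relations between $E_j(\k)$ and $E_j(\~\k)$, together with the identity $V_{,\nu}=\langle\sinh r\,\partial_r,\nu\rangle$ and $V=\cosh r$ so that $V - V_{,\nu}$ is exactly the quantity appearing in Theorem \ref{s1:main-thm} — I expect to obtain \emph{shifted} Minkowski formulae of the form
\begin{align*}
\int_\Sigma (V - V_{,\nu})\, E_{j-1}(\~\k)\, d\mu = \int_\Sigma \bigl(\text{lower-order shifted terms}\bigr)\, d\mu,
\end{align*}
and in particular a clean identity relating $\int_\Sigma (V-V_{,\nu}) E_{k-1}(\~\k)$ to $\int_\Sigma (V-V_{,\nu}) E_k(\~\k)$ and to the weighted volume $\int_\Omega V$. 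The key structural point is that in hyperbolic space the shifted mean curvature $H - n = n\, E_1(\~\k)$ plays the role that $H$ plays in Euclidean space, so the combination $(V-V_{,\nu})/(H-n)$ in \eqref{s1:shifted-Heintze-Karcher} is the natural "shifted" analogue of $1/H$.

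Next, assuming $E_k(\~\k)$ is a positive constant (positivity, hence in particular $H>n$, must first be established — see below), I would run the Newton--MacLaurin inequalities on the shifted curvatures: $E_k(\~\k)^{1/k}\le E_{k-1}(\~\k)^{1/(k-1)}\le\cdots\le E_1(\~\k)$, valid pointwise once one knows $\~\k$ lies in the appropriate Gårding cone $\Gamma_k$. Chaining these gives $E_1(\~\k)\ge E_k(\~\k)^{1/k}=\text{const}>0$, and more importantly a bound of the form $E_{k-1}(\~\k)\ge c\,(H-n)/n$ or, going the other way, $\dfrac{1}{H-n}\ge \dfrac{E_{k-1}(\~\k)}{n\,E_k(\~\k)}$ up to the constant. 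Feeding this into the left-hand side of \eqref{s1:shifted-Heintze-Karcher} and then applying the shifted Minkowski formula (for the index pair $(k-1,k)$) converts $\int_\Sigma \frac{V-V_{,\nu}}{H-n}\,d\mu$ into a multiple of $\int_\Omega V\,d\vol$, producing an inequality that is the reverse of \eqref{s1:shifted-Heintze-Karcher}. Hence equality must hold throughout; the rigidity clause of Theorem \ref{s1:main-thm} then forces $\Sigma$ to be umbilic, and a closed umbilic hypersurface in $\mathbb H^{n+1}$ with constant curvature function is a geodesic sphere. Equality in the Newton--MacLaurin chain independently gives $\~\k_1=\cdots=\~\k_n$, i.e. totally umbilic, giving a second route to the same conclusion.

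I anticipate two main obstacles. The first, and the genuinely substantive one, is establishing a priori that the shifted principal curvature vector $\~\k$ lies in $\Gamma_k$ everywhere on $\Sigma$ — without this the Newton--MacLaurin inequalities and the sign of $E_1(\~\k)$ are not available, and indeed $H>n$ itself needs justification. For $k=n$ this is immediate since $E_n(\~\k)=\prod\~\k_i$ constant and nonzero already pins down the sign structure, but for $2\le k\le n-1$ one typically argues by a connectedness/touching argument: at a point of $\Sigma$ farthest from $o$ the hypersurface is locally $h$-convex (all $\~\k_i\ge 0$), so $\~\k\in\Gamma_k$ there; then one shows $\{\~\k\in\Gamma_k\}$ is open and closed in $\Sigma$ using the constancy of $E_k(\~\k)$ and the ellipticity of $E_k$ on $\Gamma_k$. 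Alternatively one invokes a known result from $h$-convex geometry (e.g. from \cite{ACW2021}) that constant $E_k(\~\k)$ with the right sign forces $\~\k\in\Gamma_k$. The second, more routine, obstacle is bookkeeping the shifted Minkowski formulae correctly — getting the binomial coefficients and the interplay of $V$, $V_{,\nu}$ and the shift right — but this is a finite computation with no conceptual difficulty. I would present the $\Gamma_k$-cone argument first, then the Minkowski identities, then the Newton--MacLaurin chaining, and finally the equality-case rigidity.
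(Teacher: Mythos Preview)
Your strategy is the paper's: combine the shifted Heintze--Karcher inequality \eqref{s1:shifted-Heintze-Karcher}, the shifted Minkowski identity, and Newton--MacLaurin to force equality, then invoke the rigidity clause. Two corrections are worth making.

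First, your Newton--MacLaurin step has the sign reversed. From $E_k(\~\k)\le E_1(\~\k)E_{k-1}(\~\k)$ on $\Gamma_k^+$ one gets
\[
\frac{1}{H-n}=\frac{1}{nE_1(\~\k)}\le \frac{E_{k-1}(\~\k)}{\,nE_k(\~\k)\,},
\]
not $\ge$; the bound ``$E_{k-1}(\~\k)\ge c(H-n)/n$'' you wrote is likewise inverted (what actually follows is $E_{k-1}\ge E_k/E_1=nc/(H-n)$). With the correct direction, multiplying by $V-V_{,\nu}>0$, integrating, and using the shifted Minkowski identity $\int_\Sigma(V-V_{,\nu})E_{k-1}(\~\k)\,d\mu=\int_\Sigma V_{,\nu}E_k(\~\k)\,d\mu$ together with $\int_\Sigma V_{,\nu}\,d\mu=(n+1)\int_\Omega V\,d\vol$ does produce the reverse of \eqref{s1:shifted-Heintze-Karcher}, exactly as you intend.

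Second, the binomial bookkeeping you anticipate is unnecessary. The paper proves the shifted Minkowski formula \eqref{s2:shift-MF} directly: since $A_{ij}=h_{ij}-\e g_{ij}$ is Codazzi, the Newton tensor $(\dot E_k)_i^j(A)$ is divergence-free, and contracting it against $\nabla_i\nabla_jV=Vg_{ij}-V_{,\nu}h_{ij}$ gives the identity in one line. This is cleaner than expanding $E_j(\k)$ in terms of $E_j(\~\k)$. Your treatment of the $\Gamma_k^+$ step, on the other hand, is more careful than the paper's, which dispatches it with a terse ``by continuity''; the open-and-closed argument you sketch is what actually underlies that sentence.
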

For $k=1$, Theorem \ref{s1:thm-application-1} reduces to the original Alexandrov theorem for smooth embedded hypersurface with constant mean curvature in hyperbolic space \cite{Montiel-Ros1991}.

For the second application of Theorem \ref{s1:main-thm}, we recall the following uniqueness result for $h$-convex domains by Li and Xu \cite[Proposition 8.1]{Li-Xu2022}.
\begin{thmC}\cite{Li-Xu2022}
	Let $n\geq 2$ and $k\in\{1,\cdots, n\}$. Assume that $\chi(s)$ is a smooth, positive and monotone non-decreasing function defined on $\mathbb R^{+}$. Let $\Omega$ be a smooth uniformly h-convex bounded domain in $\mathbb H^{n+1}$ which contains the origin in its interior. If $\Sigma=\partial\Omega$ satisfies the following equation
	\begin{align*}
	E_k(\~\k)=\chi(V-V_{,\nu}),
	\end{align*}
then it must be a geodesic sphere. Moreover, if $\chi$ is strictly increasing, then it is a geodesic sphere centered at the origin.
\end{thmC}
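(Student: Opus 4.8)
The plan is to deduce Theorem~C from the Heintze--Karcher inequality of Theorem~\ref{s1:main-thm}, a family of \emph{shifted} Minkowski formulae, and the Newton--Maclaurin inequalities for the shifted principal curvatures. Write $V=\cosh r$, $t:=V-V_{,\nu}$ and $H=\kappa_1+\cdots+\kappa_n$. Since $\Omega$ is uniformly $h$-convex we have $\tilde\kappa_i=\kappa_i-1>0$, hence every $E_j(\tilde\kappa)>0$ and in particular $H-n=\sum_i\tilde\kappa_i=nE_1(\tilde\kappa)>0$, which is exactly what is needed to apply Theorem~\ref{s1:main-thm}; moreover $o\in\Omega$ together with horospherical convexity makes $\Sigma$ star-shaped about $o$, so $V_{,\nu}>0$, while $t\ge\cosh r-\sinh r>0$ always. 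Because $\operatorname{div}_{\mathbb{H}^{n+1}}(\sinh r\,\partial_r)=(n+1)V$, the divergence theorem gives $(n+1)\int_\Omega V\,d\vol=\int_\Sigma V_{,\nu}\,d\mu$, so \eqref{s1:shifted-Heintze-Karcher} reads
\[
\int_\Sigma\frac{t}{H-n}\,d\mu\ \ge\ \frac1n\int_\Sigma V_{,\nu}\,d\mu ,
\]
with equality precisely when $\Sigma$ is umbilic.

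Next I would establish the shifted Minkowski formulae
\[
\int_\Sigma t\,E_{j-1}(\tilde\kappa)\,d\mu\ =\ \int_\Sigma V_{,\nu}\,E_j(\tilde\kappa)\,d\mu ,\qquad j=1,\dots,n .
\]
Since $\mathbb{H}^{n+1}$ has constant curvature, the shifted Weingarten tensor $\tilde h_{ij}=h_{ij}-g_{ij}$ still satisfies the Codazzi equations (as $\nabla\tilde h=\nabla h$ is totally symmetric), so the associated Newton tensors $\tilde T_{j-1}$ are divergence free on $\Sigma$. Combining this with the conformal Killing identity for $Z=\sinh r\,\partial_r$, namely $\nabla_i(Z^{T})_j=t\,g_{ij}-V_{,\nu}\,\tilde h_{ij}$ (which follows from $\bar\nabla Z=V\bar g$ and $\tilde h=h-g$), one contracts with $\tilde T_{j-1}$ and integrates over the closed hypersurface $\Sigma$; using $\operatorname{div}_\Sigma\tilde T_{j-1}=0$, $\operatorname{tr}\tilde T_{j-1}=(n-j+1)\sigma_{j-1}(\tilde\kappa)$, $\tilde T_{j-1}^{ab}\tilde h_{ab}=j\,\sigma_j(\tilde\kappa)$ and $(n-j+1)\binom{n}{j-1}=j\binom{n}{j}$ gives the stated identity.

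The core of the argument is to sandwich the two sides of Heintze--Karcher using the equation $E_k(\tilde\kappa)=\chi(t)$. On one hand, Maclaurin's inequality $E_1(\tilde\kappa)\ge E_k(\tilde\kappa)^{1/k}=\chi(t)^{1/k}$ (equality iff $\Sigma$ is umbilic) together with the displayed form of Theorem~\ref{s1:main-thm} yields
\[
\int_\Sigma V_{,\nu}\,d\mu\ \le\ n\int_\Sigma\frac{t}{H-n}\,d\mu\ \le\ \int_\Sigma\frac{t}{\chi(t)^{1/k}}\,d\mu .
\]
On the other hand, the shifted Minkowski formula with $j=k$ and Maclaurin's inequality $E_{k-1}(\tilde\kappa)\ge E_k(\tilde\kappa)^{(k-1)/k}=\chi(t)^{(k-1)/k}$ give
\[
\int_\Sigma \chi(t)\,\frac{t}{\chi(t)^{1/k}}\,d\mu\ =\ \int_\Sigma t\,\chi(t)^{(k-1)/k}\,d\mu\ \le\ \int_\Sigma t\,E_{k-1}(\tilde\kappa)\,d\mu\ =\ \int_\Sigma V_{,\nu}\,\chi(t)\,d\mu .
\]
Writing $\eta:=\tfrac{t}{\chi(t)^{1/k}}-V_{,\nu}$, these read $\int_\Sigma\eta\,d\mu\ge0$ and $\int_\Sigma\chi(t)\,\eta\,d\mu\le0$; moreover equality in the first displayed chain, i.e.\ $\int_\Sigma\eta\,d\mu=0$, forces $\Sigma$ to be umbilic (it makes both Heintze--Karcher and Maclaurin equalities).

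I expect the main obstacle to be precisely this last step: passing from $\int_\Sigma\eta\,d\mu\ge0$ and $\int_\Sigma\chi(t)\,\eta\,d\mu\le0$ to $\int_\Sigma\eta\,d\mu=0$, that is, ruling out that $\eta$ is positive where $\chi(t)$ is small and negative where $\chi(t)$ is large. The monotonicity of $\chi$ must be the crucial input: the target is a constant $c>0$ with $\{\eta>0\}\subseteq\{\chi(t)\ge c\}$ and $\{\eta<0\}\subseteq\{\chi(t)\le c\}$, since then $(\chi(t)-c)\,\eta\ge0$ pointwise, whence $0\ge\int_\Sigma\chi(t)\,\eta\,d\mu\ge c\int_\Sigma\eta\,d\mu\ge0$ and therefore $\int_\Sigma\eta\,d\mu=0$. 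Establishing this co-ordering (morally, that $\eta$ is weakly monotone along the level sets of $t$) is where the monotonicity of $\chi$ is essential, and I would approach it either by a level-set/rearrangement argument fed by the shifted Minkowski identities, or, using the uniformly $h$-convex structure, by parametrising $\Sigma$ over $S^n=\partial_\infty\mathbb{H}^{n+1}$ via the horospherical Gauss map (so that $t$ is the exponential of the horospherical support function and the equation becomes a fully nonlinear elliptic PDE) and invoking the maximum principle at the extrema of $t$. Granting this, $\Sigma$ is umbilic, hence --- being closed --- a geodesic sphere. Finally, if $\chi$ is strictly increasing, then on a geodesic sphere of radius $\rho$ centred at $p$ the quantity $E_k(\tilde\kappa)=(\coth\rho-1)^k$ is constant, so $\chi(t)$ is constant and hence, by injectivity of $\chi$, $t=V-V_{,\nu}$ is constant on $\Sigma$; a short computation in the hyperboloid model (using $V-V_{,\nu}=-\langle o,x-\nu\rangle$ and $x-\nu=e^{-\rho}(p-w)$ along the sphere, $w$ the unit tangent at $p$ toward the point) shows $V-V_{,\nu}$ is constant on a geodesic sphere only if $p=o$, so $\Sigma$ is a geodesic sphere centred at the origin.
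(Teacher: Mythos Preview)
Your proposal has a genuine gap, which you yourself identify: from
\[
\int_\Sigma \eta\,d\mu \ge 0 \qquad\text{and}\qquad \int_\Sigma \chi(t)\,\eta\,d\mu \le 0
\]
with $\eta=\dfrac{t}{\chi(t)^{1/k}}-V_{,\nu}$, you cannot conclude $\int_\Sigma\eta\,d\mu=0$ without an additional pointwise co-ordering of $\eta$ and $\chi(t)$. There is no a priori reason for such a co-ordering, since $\eta$ depends on $V_{,\nu}$, which is governed by the geometry and not just by $t$. The two escape routes you sketch are not worked out; the rearrangement idea is only heuristic, and the horospherical-support-function/maximum-principle route is essentially the original Li--Xu argument rather than a completion of yours.

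The paper closes exactly this gap by a different and cleaner mechanism that avoids the pair of Maclaurin estimates $E_1\ge E_k^{1/k}$, $E_{k-1}\ge E_k^{(k-1)/k}$ altogether. It uses the single Newton--MacLaurin inequality $E_{k-1}(\tilde\kappa)/E_k(\tilde\kappa)\ge 1/E_1(\tilde\kappa)$ to write
\[
0\ \le\ \int_\Sigma\Bigl(\frac{V-V_{,\nu}}{E_1(\tilde\kappa)}-V_{,\nu}\Bigr)d\mu\ \le\ \int_\Sigma\frac{(V-V_{,\nu})E_{k-1}(\tilde\kappa)-V_{,\nu}E_k(\tilde\kappa)}{E_k(\tilde\kappa)}\,d\mu
=\int_\Sigma\frac{1}{\chi}\bigl[(V-V_{,\nu})E_{k-1}-V_{,\nu}E_k\bigr]d\mu,
\]
and then exploits the \emph{pointwise} identity (your shifted Minkowski formula before integration)
\[
(V-V_{,\nu})E_{k-1}(\tilde\kappa)-V_{,\nu}E_k(\tilde\kappa)=\frac{1}{k}\sum_{i,j}(\dot E_k)_i^j(\tilde\kappa)\,\nabla^i\nabla_j V .
\]
Because $(\dot E_k)_i^j$ is divergence free (Codazzi), integrating by parts against $1/\chi(V-V_{,\nu})$ and using $\nabla_i(V-V_{,\nu})=-\tilde\kappa_i\,\nabla_i V$ (from \eqref{s2.Dv}) gives
\[
\int_\Sigma\frac{1}{\chi}\bigl[(V-V_{,\nu})E_{k-1}-V_{,\nu}E_k\bigr]d\mu
=-\frac{1}{k}\int_\Sigma\frac{\chi'}{\chi^2}\sum_i\frac{\partial E_k}{\partial\tilde\kappa_i}\,\tilde\kappa_i\,|\nabla_i V|^2\,d\mu\ \le\ 0,
\]
so the chain is sandwiched between $0$ and $0$ with no rearrangement step needed. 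Equality in Heintze--Karcher and Newton--MacLaurin then force umbilicity, and if $\chi'>0$ the last integral forces $\nabla V=0$ on $\Sigma$, i.e.\ $\nu\parallel\partial_r$, giving the centred sphere. The missing idea in your approach is precisely this integration-by-parts against $1/\chi$, which converts the monotonicity hypothesis $\chi'\ge 0$ directly into a sign and makes the two-integral-inequality dilemma disappear.
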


Here we say that a smooth bounded domain in hyperbolic space is $h$-convex (resp. uniformly $h$-convex), if the principal curvatures of its boundary satisfy $\k_i\geq 1$ (resp. $\k_i\geq 1+\e$, for some constant $\e>0$). The $h$-convex domains in hyperbolic space are very natural geometric objects and have received much attentions in recent years, see e.g. \cite{BM99,CabezasRivas-Miquel2007,Wang-Xia2014,AW2018,ACW2021,Wang-Wei-Zhou2022,HL22,HLW20}.

We can use Theorem \ref{s1:main-thm} to remove the condition ``\textit{the origin lies in the interior of $\Omega$}" in the statement of Theorem C and obtain the following uniqueness result.
\begin{thm}\label{s1:thm-application-2}
	Let $n\geq 2$. Assume that $\chi(s)$ is a smooth, positive and monotone non-decreasing function defined on $\mathbb R^{+}$.  Let $\Omega$ be a smooth bounded and uniformly h-convex domain in $\mathbb{H}^{n+1}$. For each $k=1,\cdots,n$, if $\Sigma=\partial \Omega$ satisfies
	\begin{align}\label{s1.cond}
	E_k(\~\k)=\chi(V-V_{,\nu}),
	\end{align}
then $\Sigma$ is a geodesic sphere. Moreover, if $\chi$ is strictly increasing, then it is a geodesic sphere centered at the origin.
\end{thm}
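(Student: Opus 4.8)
The plan is to deduce Theorem \ref{s1:thm-application-2} from the Heintze--Karcher inequality \eqref{s1:shifted-Heintze-Karcher} of Theorem \ref{s1:main-thm} together with the Minkowski-type formula in $\mathbb{H}^{n+1}$, following the Montiel--Ros strategy adapted to the shifted setting. First I would fix the point $o\in\mathbb{H}^{n+1}$ used in Theorem \ref{s1:main-thm} to be an arbitrary point inside $\Omega$ (this is now legitimate since Theorem \ref{s1:main-thm} needs no hypothesis on the position of $o$), set $V=\cosh r$, and recall that uniform $h$-convexity, $\kappa_i\ge 1+\varepsilon$, forces in particular $\kappa_i>1$ for all $i$, hence $\tilde\kappa_i=\kappa_i-1>0$; by Newton--Maclaurin this gives $E_1(\tilde\kappa)\ge E_k(\tilde\kappa)^{1/k}>0$ and, since $E_1(\tilde\kappa)=\tfrac1n\sum(\kappa_i-1)=\tfrac{H-n}{n}$, the mean curvature satisfies $H>n$, so \eqref{s1:shifted-Heintze-Karcher} applies to $\Sigma$.

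The second ingredient is the Minkowski formula in hyperbolic space: for a closed hypersurface $\Sigma=\partial\Omega$ one has, for each $k=0,1,\dots,n-1$,
\begin{align}\label{eq:mink-shift}
\int_\Sigma \bigl(V-V_{,\nu}\bigr)\,E_k(\tilde\kappa)\,d\mu=\int_\Sigma V_{,\nu}\,E_{k+1}(\tilde\kappa)\,d\mu
\end{align}
(these are the "shifted" Minkowski identities; they follow from the classical hyperbolic Minkowski formulas $\int_\Sigma V\,\sigma_k\,d\mu$-type identities after collecting the binomial terms produced by $\tilde\kappa=\kappa-1$, using $E_0(\tilde\kappa)=1$ and $V_{,\nu}=\langle\sinh r\,\partial_r,\nu\rangle$). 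Applying \eqref{eq:mink-shift} with $k=0$ gives $\int_\Sigma (V-V_{,\nu})\,d\mu=\int_\Sigma V_{,\nu}\,E_1(\tilde\kappa)\,d\mu=\tfrac1n\int_\Sigma V_{,\nu}(H-n)\,d\mu$, while the divergence theorem in $\mathbb{H}^{n+1}$ (with $\overline\nabla V=\sinh r\,\partial_r$ and $\overline\Delta V=(n+1)V$) yields $\int_\Sigma V_{,\nu}\,d\mu=\int_\Omega \overline\Delta V\,d\vol=(n+1)\int_\Omega V\,d\vol$. The core computation then runs as in Montiel--Ros: by the equation \eqref{s1.cond}, $E_k(\tilde\kappa)=\chi(V-V_{,\nu})$ is a positive constant in the right places, and one iterates \eqref{eq:mink-shift} together with the Newton--Maclaurin inequalities $E_{j-1}(\tilde\kappa)E_{j+1}(\tilde\kappa)\le E_j(\tilde\kappa)^2$ (valid since $\tilde\kappa>0$) to obtain
\begin{align}\label{eq:chain}
\frac{n+1}{n}\int_\Omega V\,d\vol=\frac1n\int_\Sigma V_{,\nu}\,d\mu\cdot\frac{H-n}{n}\ \text{(schematically)}\ \le \int_\Sigma \frac{V-V_{,\nu}}{H-n}\,d\mu,
\end{align}
i.e.\ the reverse of \eqref{s1:shifted-Heintze-Karcher}; more precisely one shows $\int_\Sigma\frac{V-V_{,\nu}}{H-n}\,d\mu\le\int_\Sigma (V-V_{,\nu})\,E_{k}(\tilde\kappa)^{-1/k}\cdot(\text{const})\,d\mu$ is controlled from above by $\tfrac{n+1}{n}\int_\Omega V$, using $H-n=nE_1(\tilde\kappa)\ge nE_k(\tilde\kappa)^{1/k}$ so that $\frac{1}{H-n}\le\frac{1}{nE_k(\tilde\kappa)^{1/k}}=\frac{1}{n\chi(V-V_{,\nu})^{1/k}}$, and then \eqref{eq:mink-shift} to telescope the factors $E_j(\tilde\kappa)$ down. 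Combined with \eqref{s1:shifted-Heintze-Karcher} this forces equality throughout, so by the rigidity clause of Theorem \ref{s1:main-thm}, $\Sigma$ is umbilic; an umbilic closed hypersurface in $\mathbb{H}^{n+1}$ is a geodesic sphere (a horosphere or equidistant hypersurface is not closed, and the $h$-convexity $\kappa_i\ge 1+\varepsilon>1$ already excludes those borderline cases). Thus $\Sigma$ is a geodesic sphere.

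For the last statement, suppose $\chi$ is strictly increasing and let $S_R(p)$ be the geodesic sphere just found, with center $p$ and radius $R$. On $S_R(p)$ the shifted principal curvatures are all equal to $\coth R-1$, so $E_k(\tilde\kappa)\equiv(\coth R-1)^k$ is a genuine constant, whence $\chi(V-V_{,\nu})$ is constant on $\Sigma$, and by strict monotonicity $V-V_{,\nu}$ is constant on $\Sigma$. Writing $V-V_{,\nu}=\cosh r-\langle\sinh r\,\partial_r,\nu\rangle$ and parametrising $S_R(p)$ over the unit sphere, a direct computation (using that along $S_R(p)$ the outward normal makes a definite angle with $\partial_r$ depending on the position of $o$ relative to $p$) shows that $V-V_{,\nu}$ is constant on $S_R(p)$ if and only if $o=p$; equivalently, expanding in terms of the distance $d(o,p)$ one finds $V-V_{,\nu}$ varies over $S_R(p)$ unless $d(o,p)=0$. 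Since $o$ was an arbitrary interior point of $\Omega$, taking $o$ to be, say, any point of $\Omega\setminus\{p\}$ would give a contradiction unless no such point exists, i.e.\ $p$ is forced; cleanly, fixing one admissible $o\in\mathrm{int}\,\Omega$ the constancy of $V-V_{,\nu}$ pins down $p=o$, so $\Sigma$ is the geodesic sphere centered at $o$, which is the origin.

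The main obstacle I anticipate is twofold: first, establishing the shifted Minkowski identities \eqref{eq:mink-shift} cleanly — one must be careful that the conformal Killing structure ($\overline\nabla^2 V=Vg$ in $\mathbb{H}^{n+1}$) still produces the right integral identity after the shift $\kappa\mapsto\kappa-1$, and that the support function $V_{,\nu}$ appears with the correct sign; second, and more delicate, running the Montiel--Ros chain of inequalities in the present generality where the right-hand side is $\chi(V-V_{,\nu})$ rather than a constant. The point is that $E_k(\tilde\kappa)$ is not constant on $\Sigma$, only equal to the constant-along-$\Sigma$ would-fail function $\chi(V-V_{,\nu})$; so one cannot simply pull $E_k$ out of integrals. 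The resolution is to use the monotonicity of $\chi$ together with an integral rearrangement: one pairs the Heintze--Karcher integrand $\frac{V-V_{,\nu}}{H-n}$ against the Minkowski identity weighted by $\chi(V-V_{,\nu})^{1/k}$, exploiting that $s\mapsto \chi(s)$ non-decreasing makes the relevant covariance/Chebyshev-type term have a sign. This is exactly the mechanism used in Li--Xu's Theorem C, so the residual task is to verify that dropping their interior-origin hypothesis costs nothing once Theorem \ref{s1:main-thm} is in hand — which it does, since Theorem \ref{s1:main-thm} is proved with an arbitrary $o$.
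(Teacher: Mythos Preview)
Your proposal has the right overall architecture (Heintze--Karcher plus Minkowski identities plus Newton--MacLaurin to force equality), but there are two genuine problems.

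First, a small but real error: the point $o$ is \emph{fixed} by the hypothesis \eqref{s1.cond}, since $V=\cosh d(\cdot,o)$ and $V_{,\nu}$ both depend on $o$. You are not free to relocate $o$ inside $\Omega$; doing so changes the equation $E_k(\tilde\kappa)=\chi(V-V_{,\nu})$ you are trying to analyze. The whole point of Theorem~\ref{s1:main-thm} is that it holds for \emph{any} $o$, in particular for the given one, with no relation to $\Omega$ assumed. Just apply it with the given $o$.

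Second, and this is the main gap: your mechanism for exploiting the monotonicity of $\chi$ is not correct. A Chebyshev-type rearrangement will not close the chain, and your sketch via $E_1(\tilde\kappa)\ge E_k(\tilde\kappa)^{1/k}$ replaces $E_k$ by $\chi^{1/k}$ in a way that decouples from the Minkowski identities (which involve $E_{k-1}$ and $E_k$, not $E_k^{1/k}$). The paper's argument is quite different and more direct. One starts from
\[
0\;\le\;\int_\Sigma\Bigl(\frac{V-V_{,\nu}}{E_1(\tilde\kappa)}-V_{,\nu}\Bigr)d\mu
\;\le\;\int_\Sigma\frac{(V-V_{,\nu})E_{k-1}(\tilde\kappa)-V_{,\nu}E_k(\tilde\kappa)}{E_k(\tilde\kappa)}\,d\mu
\;=\;\int_\Sigma\frac{1}{\chi}\Bigl((V-V_{,\nu})E_{k-1}-V_{,\nu}E_k\Bigr)d\mu,
\]
the first inequality being Heintze--Karcher together with $\int_\Sigma V_{,\nu}\,d\mu=(n+1)\int_\Omega V\,d\vol$, and the second being $E_k\le E_1E_{k-1}$. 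The key step you are missing is that, by the pointwise identity $\sum_{i}(\dot E_k)_i^i\,\nabla_i\nabla_i V=k\bigl[(V-V_{,\nu})E_{k-1}-V_{,\nu}E_k\bigr]$ (this is \eqref{s2.div} with $\varepsilon=1$) and the divergence-free property of the Newton tensor, one can integrate by parts against $1/\chi(V-V_{,\nu})$ to obtain
\[
\int_\Sigma\frac{1}{\chi}\Bigl((V-V_{,\nu})E_{k-1}-V_{,\nu}E_k\Bigr)d\mu
=-\frac{1}{k}\int_\Sigma\frac{\chi'}{\chi^2}\sum_i\frac{\partial E_k}{\partial\tilde\kappa_i}\,\tilde\kappa_i\,|\nabla_iV|^2\,d\mu\;\le\;0,
\]
where one uses $\nabla_i(V-V_{,\nu})=-\tilde\kappa_i\nabla_iV$ from \eqref{s2.Dv}. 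This is where $\chi'\ge 0$ enters, and it is a pointwise sign, not a covariance inequality. Equality then forces both Heintze--Karcher and Newton--MacLaurin to be equalities, so $\Sigma$ is umbilic.

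Finally, for the ``centered at the origin'' clause when $\chi'>0$: rather than your geometric computation on $S_R(p)$ (which is tangled up with your mistaken freedom to move $o$), the equality in the last display forces $\nabla_iV=0$ for all $i$, so $\nu$ is parallel to $\partial_r$ everywhere on $\Sigma$, and $\Sigma$ is a level set of $r$, i.e.\ a geodesic sphere centered at the given $o$.
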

We note that if the origin lies in the interior of $\Omega$, then the uniform $h$-convexity of $\Sigma=\partial\Omega$ implies that $\Sigma$ is star-shaped with respect to the origin. The star-shapedness property is crucial to establish the uniqueness result in Theorem C, see \cite[p.97]{Li-Xu2022}.

The paper is organized as follows. In Section \ref{sec:2}, we give some preliminaries on closed hypersurfaces in hyperbolic space. In Section \ref{sec:3}, we use the unit normal flow in hyperbolic space to prove Theorem \ref{s1:main-thm}. In Section \ref{sec:4}, we give the proofs of Theorems \ref{s1:thm-application-1} and \ref{s1:thm-application-2}.

\begin{ack}
   This work is supported by National Key Research and Development Program of China 2021YFA1001800 and 2020YFA0713100, and NSFC grant No.12101027.
\end{ack}

\section{Preliminaries}\label{sec:2}
In this section, we collect some preliminaries on elementary symmetric functions and geometry of hypersurfaces in hyperbolic space.

\subsection{Elementary symmetric functions}
We first review some properties of elementary symmetric functions. See \cite{Guan14} for more details.

Let $\s_m:\mathbb R^n \ra \mathbb R$ be the $m$th elementary symmetric function defined by
\begin{align*}
\s_m(\l)=\sum_{i_1<\cdots<i_m} \l_{i_1} \cdots \l_{i_m},
\end{align*}
where $\l=(\l_1,\cdots,\l_n)\in \mathbb R^n$. Similarly, the normalized $m$th elementary symmetric function is defined by
\begin{align*}
E_m(\l)=\frac{1}{\binom{n}{m}}\s_m(\l).
\end{align*}
We also set $\s_0(\l)=E_0(\l)=1$ and $\s_m(\l)=E_m(\l)=0$ for $m>n$ by convention. For a symmetric matrix $A=(A_i^j)$, we set
$$
\s_m(A)=\frac{1}{m!}\d_{j_1\cdots j_m}^{i_1\cdots i_m} A_{i_1}^{j_1}\cdots  A_{i_m}^{j_m},
$$
where $\d_{j_1\cdots j_m}^{i_1\cdots i_m}$ is the generalized Kronecker symbol.

If $\l(A)=(\l_1(A),\cdots,\l_n(A))$ are the eigenvalues of $A$, then $E_m(A)=E_m(\l(A))$. The G{\aa}rding cone is defined by
$$
\G_m^+=\{\l\in \mathbb R^n:~\s_i(\l)>0,1\leq i\leq m\}.
$$
A symmetric matrix $A$ is said to belong to $\G_m^+$ if its eigenvalues $\l(A)\in \G_m^{+}$. Denote by
\begin{align*}
(\dot{E}_{m})_i^j(A)=\frac{\partial E_m(A)}{\partial A_j^i}=\frac{m(n-m)!}{n!}\d_{i i_1\cdots i_{m-1}}^{j j_1\cdots j_{m-1}} A_{j_1}^{i_1}\cdots A_{j_{m-1}}^{i_{m-1}}.
\end{align*}
If $A$ is a diagonal matrix with $\l(A)=(\l_1,\cdots,\l_n)$, then
\begin{align*}
(\dot{E}_{m})_i^j(A)=\frac{\partial E_m}{\partial \l_i}(\l) \d_i^j.
\end{align*}
The following formulae for the elementary symmetric functions are  well-known.
\begin{lem}\cite{Hardy-Littlewood-Polya1934,Reilly1973}
	We have
	\begin{align}
	\sum_{i,j}(\dot{E}_m)_i^j(A) A_j^i=&m E_m(A),\label{s2:id-1}\\
	\sum_{i,j}(\dot{E}_m)_i^j(A) \d_j^i=&m E_{m-1}(A), \label{s2:id-2}\\
	\sum_{i,j}(\dot{E}_m)_i^j(A) (A^2)_j^i=& n E_1(A)E_m(A)-(n-m)E_{m+1}(A),\label{s2:id-3}
	\end{align}
	where $(A^2)_j^i=\sum_{k}A_j^kA_k^i$.
\end{lem}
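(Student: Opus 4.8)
The plan is to reduce all three identities to the case of a diagonal matrix, where they become classical Newton-type relations among the normalized elementary symmetric functions, and then to verify them by elementary combinatorics of the $\s_m$'s.

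\emph{Step 1: reduction to the diagonal case.} First I would observe that both sides of \eqref{s2:id-1}, \eqref{s2:id-2} and \eqref{s2:id-3} are invariant under the substitution $A\mapsto O^{T}AO$ for every orthogonal $n\times n$ matrix $O$. Indeed $E_m$, $E_{m+1}$ and $E_1$ depend only on the eigenvalues of $A$, while $\dot E_m$ is equivariant, $\dot E_m(O^{T}AO)=O^{T}\dot E_m(A)O$, and $\d$, $A$, $A^{2}$ transform compatibly; hence the bilinear contractions $\sum_{i,j}(\dot E_m)_i^j(A)\,A_j^i$, $\sum_{i,j}(\dot E_m)_i^j(A)\,\d_j^i$ and $\sum_{i,j}(\dot E_m)_i^j(A)\,(A^{2})_j^i$ are unchanged. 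Since $A$ is symmetric it is orthogonally diagonalizable, so it suffices to treat $A=\operatorname{diag}(\l_1,\dots,\l_n)$. For such $A$ the excerpt already records $(\dot E_m)_i^j(A)=\partial_{\l_i}E_m(\l)\,\d_i^j$, so the three identities reduce to
\begin{align*}
\sum_i\l_i\,\partial_{\l_i}E_m=mE_m,\qquad \sum_i\partial_{\l_i}E_m=mE_{m-1},\qquad \sum_i\l_i^{2}\,\partial_{\l_i}E_m=nE_1E_m-(n-m)E_{m+1}.
\end{align*}

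\emph{Step 2: the three reduced identities.} The first is immediate from Euler's identity for homogeneous functions, since $E_m$ is homogeneous of degree $m$ in $\l$. For the other two I would pass to the unnormalized $\s_m$, converting back via $E_m=\binom{n}{m}^{-1}\s_m$ at the end. Writing $\s_k(\l|_i)$ for the $k$-th elementary symmetric function of $\l$ with the entry $\l_i$ deleted, the basic facts are $\partial_{\l_i}\s_m=\s_{m-1}(\l|_i)$, the splitting $\s_m(\l)=\s_m(\l|_i)+\l_i\s_{m-1}(\l|_i)$, and the counting identity $\sum_{i}\s_k(\l|_i)=(n-k)\s_k(\l)$ (each degree-$k$ monomial omits exactly $n-k$ indices). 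Summing $\partial_{\l_i}\s_m=\s_{m-1}(\l|_i)$ over $i$ gives $\sum_i\partial_{\l_i}\s_m=(n-m+1)\s_{m-1}$, which is \eqref{s2:id-2} after normalization. For \eqref{s2:id-3}, iterating the splitting (once to peel off $\l_i\s_{m-1}(\l|_i)$, once more to peel off $\l_i\s_m(\l|_i)$) yields $\l_i^{2}\,\partial_{\l_i}\s_m=\l_i\s_m(\l)-\s_{m+1}(\l)+\s_{m+1}(\l|_i)$; summing over $i$ and using the counting identity with $k=m+1$ gives $\sum_i\l_i^{2}\partial_{\l_i}\s_m=\s_1\s_m-(m+1)\s_{m+1}$, and normalizing (with $\s_1=nE_1$) produces $nE_1E_m-(n-m)E_{m+1}$. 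The conventions $\s_j=E_j=0$ for $j>n$ and $\s_0=E_0=1$ cover the boundary cases $m=n$ and $m=1$.

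\emph{Main point of care.} There is no genuine obstacle; the computation is routine. The only things warranting attention are checking that the orthogonal-equivariance step is correctly set up so that all three contractions are genuinely invariant, and keeping the binomial-coefficient bookkeeping straight when passing between $\s_m$ and $E_m=\binom{n}{m}^{-1}\s_m$, so that the coefficients collapse to exactly the stated $m$, $m$, and $n-m$.
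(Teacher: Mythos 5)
Your proof is correct: the verification of the counting identity, the splitting $\s_m(\l)=\s_m(\l|_i)+\l_i\s_{m-1}(\l|_i)$, and the binomial bookkeeping all check out, and they yield exactly the coefficients $m$, $m$, and $n-m$ in \eqref{s2:id-1}--\eqref{s2:id-3}. The paper offers no proof of this lemma, citing \cite{Hardy-Littlewood-Polya1934,Reilly1973} instead, and your argument (orthogonal reduction to the diagonal case plus the classical Newton-type relations for the $\s_m$'s) is essentially the standard derivation found in those references, so nothing further is needed.
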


\begin{lem}\cite[Lemma 2.5]{Guan14}
	If $\l\in \G_{m}^+$, then the following Newton-MacLaurin inequality holds
	\begin{align}\label{s2:NM-ineq-1}
	E_m(\l) \leq E_1(\l)E_{m-1}(\l). 
	\end{align}
	Equality holds if and only if $\l=cI$ for some constant $c>0$, where $I=(1,\cdots,1)$.
\end{lem}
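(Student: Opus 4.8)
The plan is to derive \eqref{s2:NM-ineq-1} from the classical Newton inequalities via the monotonicity of the consecutive ratios $E_j(\l)/E_{j-1}(\l)$. Recall that for \emph{every} $\l\in\R^n$ one has
\[
E_j(\l)^2\ \ge\ E_{j-1}(\l)\,E_{j+1}(\l),\qquad 1\le j\le n-1,
\]
with no positivity hypothesis on $\l$; this may be quoted from \cite{Hardy-Littlewood-Polya1934}, or proved in the standard way by differentiating the real-rooted polynomial $\prod_i(t+\l_i)$, using Rolle's theorem (counting multiplicities) to see that differentiation and the reversal $t\mapsto 1/t$ preserve real-rootedness, and then reducing to the trivial case $n=2$. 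Throughout I assume $m\ge 2$; for $m=1$ the inequality \eqref{s2:NM-ineq-1} reduces to the trivial $E_1\le E_1$.

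Now let $\l\in\G_m^+$. By the definition of the G{\aa}rding cone, $\s_i(\l)>0$ for $1\le i\le m$, hence $E_i(\l)>0$ for $0\le i\le m$ (with $E_0\equiv 1$). Put $r_j:=E_j(\l)/E_{j-1}(\l)>0$ for $1\le j\le m$. For each $1\le j\le m-1$ both $E_{j-1}(\l)$ and $E_j(\l)$ are positive, so dividing Newton's inequality by $E_{j-1}(\l)E_j(\l)$ yields $r_j\ge r_{j+1}$. Therefore $r_1\ge r_2\ge\cdots\ge r_m$, and in particular $r_1\ge r_m$. Since $E_0=1$, this says $E_1(\l)\ge E_m(\l)/E_{m-1}(\l)$, which is precisely \eqref{s2:NM-ineq-1} after multiplying through by $E_{m-1}(\l)>0$.

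For the equality statement, assume equality holds in \eqref{s2:NM-ineq-1}, equivalently $r_1=r_m$. As the $r_j$ are non-increasing, this forces $r_1=r_2=\cdots=r_m$; in particular $r_1=r_2$, i.e. $E_1(\l)^2=E_0E_2(\l)=E_2(\l)$. A direct expansion gives
\[
E_1(\l)^2-E_2(\l)=\frac{1}{n(n-1)}\left(\sum_{i=1}^n\l_i^2-\frac1n\Big(\sum_{i=1}^n\l_i\Big)^2\right),
\]
and by the Cauchy--Schwarz inequality the right-hand side vanishes if and only if $\l_1=\cdots=\l_n$. Writing $c$ for the common value, $c=E_1(\l)>0$ since $\l\in\G_m^+\subset\G_1^+$, so $\l=cI$ with $c>0$. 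Conversely, if $\l=cI$ with $c>0$ then $E_j(\l)=c^j$ for every $j$, so both sides of \eqref{s2:NM-ineq-1} equal $c^m$ and equality indeed holds.

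The only external ingredient is Newton's inequality; everything else is elementary algebra, so the "main obstacle" amounts to deciding whether to quote or to reprove that fact. If one insists on a \emph{self-contained} treatment, the single mildly delicate point is the multiplicity bookkeeping in the Rolle-type argument that the derivative of a real-rooted polynomial is again real-rooted — but this is completely classical and the cited reference covers it. The observation that $\l\in\G_m^+$ makes $E_0,\dots,E_m$ positive, which the ratio argument needs, is immediate from the definition of $\G_m^+$.
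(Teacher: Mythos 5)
Your proof is correct: the positivity of $E_0,\dots,E_m$ on $\Gamma_m^+$, the monotonicity of the ratios $E_j/E_{j-1}$ obtained from Newton's inequalities (which indeed hold for arbitrary real $\lambda$), and the Cauchy--Schwarz identification of the equality case via $E_1^2=E_2$ are all sound, and your restriction to $m\ge 2$ is exactly what the equality statement needs (for $m=1$ equality is automatic). The paper gives no proof of its own --- it simply cites \cite[Lemma 2.5]{Guan14} --- and your argument is the standard derivation behind that citation, so there is no genuinely different route to compare.
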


\subsection{Hypersurfaces in hyperbolic space}

Fix a point $o\in \mathbb H^{n+1}$, we take $r(x)=d(o,x)$. Using the geodesic polar coordinates with respect to $o$, the hyperbolic space $\mathbb H^{n+1}$ can be expressed as a warped product $[0,\infty)\times \mathbb S^n$ equipped with the metric
\begin{equation*}
  \-g=dr^2+\l(r)^2 g_{\mathbb S^n},
\end{equation*}
 where $g_{\mathbb S^n}$ is the round metric of $\mathbb S^n$ and $\l(r)=\sinh r$. Then $\l'(r)=\cosh r$.

Let $\Sigma$ be a smooth hypersurface in $\mathbb H^{n+1}$ with unit outward normal $\nu$.  Let $e_1,\cdots,e_n$ be an orthonormal basis of the tangent space of $\Sigma$. Denote $\overline{\nabla}$ as the Levi-Civita connection on $\mathbb{H}^{n+1}$. Then the induced metric $g$ of $\Sigma$ is $g_{ij}=\-g(e_i,e_j)$, and the second fundamental form $h=(h_{ij})$ of $\Sigma$ in $\mathbb H^{n+1}$ is given by
\begin{align*}
h_{ij}=h(e_i,e_j)=\bar{g}(\-\nabla_{e_i} \nu,e_j).
\end{align*}
The principal curvatures $\k=(\k_1,\cdots,\k_n)$ of $\Sigma$ are the eigenvalues of the Weingarten matrix $(h_i^j)=(g^{jk}h_{ki})$, where $(g^{ij})$ is the inverse matrix of $(g_{ij})$. The mean curvature of $\Sigma$ is defined as
\begin{equation*}
  H=g^{ij}h_{ij}=\sum_{i=1}^{n}\kappa_i.
\end{equation*}
 Without loss of generality, we assume that $e_1,\cdots,e_n$ are the corresponding principal directions with respect to the principal curvatures $\k_1,\cdots,\k_n$.

\begin{lem}
	Let $\Sigma$ be a smooth hypersurface in $\mathbb H^{n+1}$. Denote $V=\cosh r$ and $V_{,\nu}=\langle \-\nabla V,\nu\rangle=\langle \l\partial_r,\nu\rangle$. Then we have
    \begin{align}\label{s2.Dv}
		\nabla_i V =\langle \-\nabla V,e_i\rangle=\langle \l(r)\partial_r,e_i\rangle, \quad \nabla_i V_{,\nu}=\langle \l\partial_r, h_{i}^je_j\rangle,
    \end{align}
    and
    \begin{align}
    & \nabla_i\nabla_j V=V g_{ij}-V_{,\nu} h_{ij},\label{s2.D2V}\\
   & \nabla_i\nabla_j V_{,\nu}=\langle \l\partial_r,\nabla h_{ij}\rangle+V h_{ij} - V_{,\nu}(h^2)_{ij},\label{s3:hess-supp}
    \end{align}
    where $\{e_1,\cdots,e_n\}$ is a local orthonormal frame on $\Sigma$ and $(h^2)_{ij}=h_i^k h_{kj}$.
	Furthermore, if the hypersurface $\Sigma$ is closed, then for each $\e\in \mathbb R$, there holds
	\begin{align}\label{s2:shift-MF}
	\int_{\Sigma} (V-\e V_{,\nu}) E_{k-1}(A) d\mu = \int_{\Sigma} V_{,\nu} E_{k}(A) d\mu, \quad k=1,\cdots,n,
	\end{align}
	where $A=(h_j^i-\e \d_j^i)$.
\end{lem}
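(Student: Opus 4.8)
The plan is to verify each formula by a direct computation using the warped product structure $\-g = dr^2 + \l(r)^2 g_{\mathbb S^n}$ with $\l = \sinh r$, $\l' = \cosh r = V$, together with the basic fact that the position vector field $X = \l(r)\partial_r$ is a conformal Killing field on $\mathbb H^{n+1}$ satisfying $\-\nabla X = \l'(r)\, \-g = V\, \-g$ (equivalently $\-\nabla_Y X = V Y$ for every vector field $Y$), which is just the statement $\-\nabla^2 V = V\, \-g$, the defining ODE for $V = \cosh r$ in hyperbolic space. From $\-\nabla V = \l(r)\partial_r = X$ one reads off \eqref{s2.Dv}: projecting $X$ onto $e_i$ gives $\nabla_i V = \langle X, e_i\rangle$, and $\nabla_i V_{,\nu} = \nabla_i\langle X,\nu\rangle = \langle\-\nabla_{e_i}X,\nu\rangle + \langle X,\-\nabla_{e_i}\nu\rangle = V\langle e_i,\nu\rangle + \langle X, h_i^j e_j\rangle = \langle X, h_i^j e_j\rangle$ since $\nu\perp e_i$. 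For the Hessians \eqref{s2.D2V} and \eqref{s3:hess-supp} I would use the Gauss–Weingarten relations: for a function $f$ on $\mathbb H^{n+1}$ restricted to $\Sigma$, $\nabla_i\nabla_j(f|_\Sigma) = \-\nabla_i\-\nabla_j f - h_{ij}\langle\-\nabla f,\nu\rangle$. Applying this to $f = V$ gives $\nabla_i\nabla_j V = V g_{ij} - V_{,\nu} h_{ij}$ immediately. Applying it to $f = V_{,\nu}$ is slightly more involved: one differentiates $\nabla_i V_{,\nu} = \langle X, h_i^k e_k\rangle$ once more, using $\-\nabla_{e_j} X = V e_j$, the Codazzi equation to write $\nabla_j h_{ik}$ symmetrically, and the Weingarten equation $\-\nabla_{e_j} e_k = \nabla_{e_j}e_k - h_{jk}\nu$, which produces the three terms $\langle X,\nabla h_{ij}\rangle$, $V h_{ij}$, and $-V_{,\nu}(h^2)_{ij}$ after using $\langle X,\nu\rangle = V_{,\nu}$ and $\langle X, e_k\rangle = \nabla_k V$ and collecting.

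For the integral identity \eqref{s2:shift-MF}, the plan is to integrate a divergence over the closed hypersurface $\Sigma$. Consider the symmetric $(1,1)$-tensor $T_{k-1}(A)$ obtained as the Newton tensor of $A = (h_j^i - \e\d_j^i)$, i.e. $(T_{k-1})_i^j = \binom{n}{k-1}(\dot E_{k-1})_i^j(A)$ up to normalization; recall that Newton tensors are divergence-free on space forms — more precisely, $\nabla_j (\dot E_{k-1})_i^j(A) = 0$ when $A$ has the form "Weingarten map minus a constant multiple of the identity" in a space form, since shifting by a constant does not affect the Codazzi structure and the divergence-free property of Newton transformations holds in space forms. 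Then I would compute the tangential divergence of the vector field $W^j = (\dot E_{k-1})_i^j(A)\,\nabla^i(V - \e V_{,\nu})$ — or more transparently, test the identity by contracting $(\dot E_{k-1})_i^j(A)$ against $\nabla_i\nabla_j(\text{something})$. The cleanest route: take the $(1,1)$-tensor $(\dot E_{k-1})_i^j(A)$ and contract with $\nabla_i\nabla_j V = V g_{ij} - V_{,\nu}h_{ij}$. Using \eqref{s2:id-2} and \eqref{s2:id-1} (applied to $A$, with $h_i^j = A_i^j + \e\d_i^j$) this contraction equals, pointwise, a combination of $E_{k-1}(A)$, $E_{k-2}(A)$, $E_k(A)$ times $V$ and $V_{,\nu}$. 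Integrating and using the divergence-free property to discard the total-divergence term $\nabla_j\big((\dot E_{k-1})_i^j(A)\nabla^i V\big)$, which integrates to zero on the closed $\Sigma$, yields a relation; doing the same with $\nabla_i\nabla_j V_{,\nu}$ from \eqref{s3:hess-supp} and combining (the $\langle X,\nabla h_{ij}\rangle$ term pairs against the divergence-free Newton tensor to again give a total divergence) produces exactly \eqref{s2:shift-MF}. I would organize this so that after integration by parts the only surviving terms are $\int_\Sigma (V - \e V_{,\nu}) E_{k-1}(A)$ on one side and $\int_\Sigma V_{,\nu} E_k(A)$ on the other, with all intermediate $E_{k-2}(A)$ and $(h^2)$ contributions cancelling by the algebraic identities \eqref{s2:id-1}–\eqref{s2:id-3}.

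The main obstacle is the bookkeeping in \eqref{s3:hess-supp} and, more seriously, verifying carefully that the Newton tensor of the shifted Weingarten map is divergence-free in $\mathbb H^{n+1}$: this uses Codazzi ($\nabla_k h_{ij} = \nabla_i h_{kj}$ in a space form) plus the fact that $\dot E_{k-1}$ is a polynomial in $A$ alone, so constant shifts of $A$ commute with the divergence operation; once this is in hand the rest is a contraction of \eqref{s2.D2V}–\eqref{s3:hess-supp} against $(\dot E_{k-1})_i^j(A)$ followed by \eqref{s2:id-1}–\eqref{s2:id-3}. I would present \eqref{s2.Dv}–\eqref{s3:hess-supp} with brief computations and then give the divergence argument for \eqref{s2:shift-MF} in full, since it is the geometric heart of the later Minkowski-type formula on $\Sigma_t^\ast$.
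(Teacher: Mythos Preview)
Your approach is essentially the paper's: both use the conformal Killing property $\-\nabla_Y(\l\partial_r)=VY$ for the pointwise identities \eqref{s2.Dv}--\eqref{s3:hess-supp}, the fact that $A=h-\e g$ is Codazzi so $(\dot E_k)(A)$ is divergence-free, and integration by parts on the closed $\Sigma$.

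One simplification you are missing: the paper contracts $(\dot E_k)_i^j(A)$ (not $(\dot E_{k-1})$) against $\nabla_i\nabla_j V$ \emph{alone}. Writing $h_j^i=A_j^i+\e\,\d_j^i$ and using only \eqref{s2:id-1}--\eqref{s2:id-2} gives immediately
\[
\sum_{i,j}(\dot E_k)_i^j(A)\,\nabla^i\nabla_j V
= k\Big((V-\e V_{,\nu})E_{k-1}(A)-V_{,\nu}E_k(A)\Big),
\]
and integrating by parts yields \eqref{s2:shift-MF} in one stroke. No $E_{k-2}$ term appears, \eqref{s2:id-3} is not needed, and the Hessian formula \eqref{s3:hess-supp} for $V_{,\nu}$ plays no role in this identity. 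Your proposed second step (contracting against $\nabla_i\nabla_j V_{,\nu}$ and combining) is therefore unnecessary; your ``cleanest route'' already suffices once you shift the Newton-tensor index from $k-1$ to $k$.
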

\begin{proof}
    \eqref{s2.Dv} -- \eqref{s3:hess-supp} are well-known, see e.g. \cite{Guan-Li2014}. To prove \eqref{s2:shift-MF}, by \eqref{s2:id-1} -- \eqref{s2:id-2} and \eqref{s2.D2V}, we have
	\begin{align}\label{s2.div}
	 \sum_{i,j}(\dot{E}_k)_i^j(A) \nabla^i\nabla_j V=&\sum_{i,j}(\dot{E}_k)_i^j(A) (V\d_j^i- V_{,\nu} h_j^i)\nonumber\\
=&\sum_{i,j}(\dot{E}_k)_i^j(A) \left[ (V-\e V_{,\nu})\d_j^i-V_{,\nu}(h_j^i-\e \d_j^i)\right]\nonumber\\
=&k\Big((V-\e V_{,\nu}) E_{k-1}(A)-V_{,\nu}E_{k}(A)\Big).
	\end{align}
 Note that $A_{ij}=h_{ij}-\e g_{ij}$ is a Codazzi tensor, i.e., $\nabla_\ell A_{ij}$ is symmetric in $i,j,\ell$. It follows that $(\dot{E}_k)_i^j(A)$ is divergence free. The equation \eqref{s2:shift-MF} follows from  integration by parts and the divergence free property of $(\dot{E}_k)_i^j(A)$.
\end{proof}

Along the general flow
\begin{align}\label{s2:general-flow}
\frac{\partial}{\partial t}X=\mathcal{F}\nu
\end{align}
for hypersurfaces in hyperbolic space $\mathbb{H}^{n+1}$, where $\nu$ is the unit outward normal and $\mathcal{F}$ is the speed function of the smooth part of the evolving hypersurface $\Sigma_t$, we have the following evolution equations (see e.g. \cite[Theorem 3.2]{HP99}).
\begin{prop}\label{s2:prop-general-evolution}
	Along the flow \eqref{s2:general-flow} in $\mathbb{H}^{n+1}$, we have
	\begin{align}
	\partial_t g_{ij}=&2\mathcal{F} h_{ij}, \label{s2:evolution-induced-metric}\\
	\partial_t d\mu_t=&\mathcal{F}H d\mu_t, \label{s2:evolution-area}\\
	\partial_t \nu =&-\nabla \mathcal{F}, \label{s2:evolution-unit-normal}\\
	\partial_t h_i^j=&-\nabla^j \nabla_i \mathcal{F}-\mathcal{F}((h^2)_i^j-\d_i^j),\label{s2:evolution-Weingarten}
	\end{align}
	where $(h^2)_i^j=h_i^k h_k^j$, and $d\mu_t$ is the area form on $\Sigma_t^\ast$.
\end{prop}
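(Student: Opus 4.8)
The plan is to obtain all four identities by differentiating the defining expressions along the flow $\partial_t X=\mathcal F\nu$ and commuting time and space covariant derivatives, keeping careful track of the ambient curvature of $\mathbb H^{n+1}$. Throughout I write $\overline\nabla$ for the Levi-Civita connection of $\bar g$ on $\mathbb H^{n+1}$, abbreviate $\partial_i:=\partial_i X$ for the coordinate tangent fields of the evolving hypersurface, and use that $\mathbb H^{n+1}$ has constant sectional curvature $-1$, so that $\overline R(X,Y)Z=\bar g(X,Z)Y-\bar g(Y,Z)X$. Since all the computations are pointwise, it suffices to work on the smooth part $\Sigma_t^\ast$, where $\mathcal F$ is defined.

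First I would prove \eqref{s2:evolution-induced-metric}. From $g_{ij}=\bar g(\partial_i,\partial_j)$ and $[\partial_t,\partial_i]=0$, metric compatibility gives $\partial_t g_{ij}=\bar g(\overline\nabla_{\partial_i}(\mathcal F\nu),\partial_j)+\bar g(\partial_i,\overline\nabla_{\partial_j}(\mathcal F\nu))$; expanding $\overline\nabla_{\partial_i}(\mathcal F\nu)=(\nabla_i\mathcal F)\nu+\mathcal F\,\overline\nabla_{\partial_i}\nu$ and using $\bar g(\nu,\partial_j)=0$ together with the Weingarten relation $\bar g(\overline\nabla_{\partial_i}\nu,\partial_j)=h_{ij}$ gives $\partial_t g_{ij}=2\mathcal F h_{ij}$. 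Then \eqref{s2:evolution-area} is immediate from $d\mu_t=\sqrt{\det g}\,dx$ and $\partial_t\log\det g=g^{ij}\partial_t g_{ij}=2\mathcal F H$. For \eqref{s2:evolution-unit-normal}, differentiating $\bar g(\nu,\nu)=1$ shows $\overline\nabla_{\partial_t}\nu$ is tangent to $\Sigma_t^\ast$, while differentiating $\bar g(\nu,\partial_i)=0$ gives $\bar g(\overline\nabla_{\partial_t}\nu,\partial_i)=-\bar g(\nu,\overline\nabla_{\partial_i}(\mathcal F\nu))=-\nabla_i\mathcal F$, hence $\overline\nabla_{\partial_t}\nu=-\nabla\mathcal F$, the tangential gradient of $\mathcal F$.

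The substantive step is \eqref{s2:evolution-Weingarten}. I would start from $h_{ij}=-\bar g(\overline\nabla_{\partial_i}\partial_j,\nu)$ together with the Gauss formula $\overline\nabla_{\partial_i}\partial_j=\Gamma_{ij}^k\partial_k-h_{ij}\nu$, and differentiate in $t$. Commuting derivatives via $\overline\nabla_{\partial_t}\overline\nabla_{\partial_i}\partial_j=\overline\nabla_{\partial_i}\overline\nabla_{\partial_t}\partial_j+\mathcal F\,\overline R(\nu,\partial_i)\partial_j$, substituting $\overline\nabla_{\partial_t}\partial_j=\overline\nabla_{\partial_j}(\mathcal F\nu)=(\nabla_j\mathcal F)\nu+\mathcal F\,h_j^k\partial_k$, taking the component along $\nu$, and inserting $\overline\nabla_{\partial_t}\nu=-\nabla\mathcal F$ from the previous step, the tangential derivatives of $\mathcal F$ assemble into the Hessian $\nabla_i\nabla_j\mathcal F$, the second fundamental form terms contribute $\mathcal F(h^2)_{ij}$, and the ambient curvature term contributes $-\mathcal F\,\bar g(\overline R(\nu,\partial_i)\partial_j,\nu)=\mathcal F g_{ij}$ by the constant curvature $-1$ formula. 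This gives $\partial_t h_{ij}=-\nabla_i\nabla_j\mathcal F+\mathcal F(h^2)_{ij}+\mathcal F g_{ij}$. Raising an index through $h_i^j=g^{jk}h_{ki}$ and using $\partial_t g^{jk}=-2\mathcal F h^{jk}$, I obtain $\partial_t h_i^j=-2\mathcal F(h^2)_i^j-\nabla^j\nabla_i\mathcal F+\mathcal F(h^2)_i^j+\mathcal F\delta_i^j=-\nabla^j\nabla_i\mathcal F-\mathcal F((h^2)_i^j-\delta_i^j)$, which is \eqref{s2:evolution-Weingarten}.

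The only point needing care is the bookkeeping in this last computation: one must commute $\overline\nabla_{\partial_t}$ past $\overline\nabla_{\partial_i}$ with the correct curvature remainder and keep its sign straight, since it is precisely this term that produces the $+\mathcal F\delta_i^j$ correction distinguishing the hyperbolic evolution equation from its Euclidean analogue $\partial_t h_i^j=-\nabla^j\nabla_i\mathcal F-\mathcal F(h^2)_i^j$; once that term is pinned down, the remaining manipulations (absorbing the Christoffel terms into covariant Hessians and raising the index) are routine. I would present the argument at this level of detail, referring to \cite{HP99} for the parallel computation in a general Riemannian ambient space.
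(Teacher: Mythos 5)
Your derivation is correct: the sign conventions you use ($h_{ij}=\bar g(\overline\nabla_{e_i}\nu,e_j)$, curvature $-1$ so $\overline R(X,Y)Z=\bar g(X,Z)Y-\bar g(Y,Z)X$) match the paper's, the curvature commutator indeed produces the $+\mathcal F\delta_i^j$ correction, and the index-raising step with $\partial_t g^{jk}=-2\mathcal F h^{jk}$ yields exactly \eqref{s2:evolution-Weingarten}. The paper itself gives no proof, simply citing \cite{HP99}, and your argument is precisely the standard computation found there, so it is essentially the same approach.
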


\section{Proof of Theorem \ref{s1:main-thm}}\label{sec:3}
Let $\Omega$ be a bounded domain in $(\mathbb H^{n+1},\-g)$ with smooth boundary $\Sigma=\partial \Omega$. Let $\nu$ denote the outward unit normal to $\Sigma$. Throughout this section, we assume that the mean curvature of $\Sigma$ satisfies $H>n$ with respect to this choice of unit normal.

Denote by $X:\Sigma\times [0,\infty)\ra \-\Omega$ the normal exponential map with respect to $\-g$. For each point $y\in \Sigma$, the curve $t \mapsto X(y,t)$ is a geodesic which has unit speed, and
\begin{align}\label{s2:unit-normal-flow}
\left\{\begin{aligned}
\left.\frac{\partial}{\partial t}X(y,t)\right|_{t=0}=&-\nu(y),\\
X(y,0)=&y.
\end{aligned}\right.
\end{align}
For each point $p\in \-\Omega$, we denote by $v(p)=d(p,\Sigma)$ the geodesic distance to $\Sigma$ and $\{v=0\}=\Sigma$.  We define
\begin{align*}
A=&\left\{(x,t)\in \Sigma\times [0,\infty):v(X(x,t))=t\right\},
\end{align*}
and
\begin{align*}
A^\ast=&\left\{(x,t)\in \Sigma\times [0,\infty):(x,t+\d)\in A ~\text{for some $\d>0$}\right\}.
\end{align*}
The following proposition follows from the definition of segment domain of a Riemannian manifold, see \cite[Proposition 3.1]{Bre13}.
\begin{prop}\cite{Bre13}
	The set $A$ and $A^\ast$ have the following properties:
	\begin{enumerate}[$(i)$]
		\item If $(x,t_0)\in A$, then $(x,t)\in A$ for all $t\in [0,t_0]$.
		\item The set $A$ is closed, and we have $X(A)=\-\Omega$.
		\item The set $A^\ast$ is an open subset of $\Sigma\times [0,\infty)$, and the restriction $X|_{A^\ast}$ is a diffeomorphism.
	\end{enumerate}
\end{prop}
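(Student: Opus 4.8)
The plan is to establish the three assertions in turn, using elementary comparison-of-lengths arguments together with the classical structure theory of the cut locus of the smooth compact hypersurface $\Sigma$ (as in \cite{Bre13}). For part $(i)$: if $(x,t_0)\in A$, the unit-speed geodesic $s\mapsto X(x,s)$ on $[0,t_0]$ has length $t_0$, joins $x\in\Sigma$ to $X(x,t_0)$, and, because $v(X(x,t_0))=t_0=d(X(x,t_0),\Sigma)$, realises the distance from $X(x,t_0)$ to $\Sigma$. For $t\in[0,t_0]$, the subarc over $[0,t]$ shows $v(X(x,t))\le t$; were $v(X(x,t))<t$, concatenating a shorter path from $X(x,t)$ to $\Sigma$ with the reversed subarc over $[t,t_0]$ would give a path from $X(x,t_0)$ to $\Sigma$ of length strictly less than $t_0$, contradicting $d(X(x,t_0),\Sigma)=t_0$. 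Hence $(x,t)\in A$.

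For part $(ii)$, closedness of $A$ is immediate: $v=d(\cdot,\Sigma)$ is $1$-Lipschitz and $X$ is smooth, so $(x,t)\mapsto v(X(x,t))-t$ is continuous on $\Sigma\times[0,\infty)$ and $A$ is precisely its zero set. The inclusion $X(A)\subseteq\-\Omega$ holds because, if $(x,t)\in A$ but $X(x,t)\notin\-\Omega$, then since $X(x,\cdot)$ initially enters $\Omega$ it must meet $\Sigma=\partial\Omega$ again at some $s_0\in(0,t)$, whence $v(X(x,t))\le t-s_0<t$, a contradiction. For the reverse inclusion, let $p\in\-\Omega$: if $p\in\Sigma$ then $(p,0)\in A$ with $X(p,0)=p$, while if $p\in\Omega$ we use compactness of $\Sigma$ to choose a foot point $y\in\Sigma$ with $d(p,y)=d(p,\Sigma)=:t>0$ and a minimizing unit-speed geodesic $\gamma$ from $y$ to $p$. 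The first variation formula forces $\gamma$ to meet $\Sigma$ orthogonally at $y$, and since $\gamma((0,t])$ cannot meet $\Sigma$ (a point $\gamma(s_1)\in\Sigma$ with $s_1\in(0,t)$ would give $d(p,\Sigma)\le t-s_1<t$) the geodesic must enter $\Omega$, i.e.\ $\gamma'(0)=-\nu(y)$; thus $\gamma(s)=X(y,s)$, $X(y,t)=p$, and $v(p)=t$ gives $(y,t)\in A$.

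For part $(iii)$, combine $(i)$ with the closedness of $A$ to define the cut-distance function $\rho(x):=\max\{t\ge0:(x,t)\in A\}$, which is finite since $v$ is bounded on the compact set $\-\Omega$; then $A=\{(x,t):0\le t\le\rho(x)\}$ and $A^\ast=\{(x,t):0\le t<\rho(x)\}$. Openness of $A^\ast$ follows from the lower semicontinuity of $\rho$, a classical fact from cut-locus theory. To see that $X|_{A^\ast}$ is a diffeomorphism onto its image, note first that $X|_{A^\ast}$ is an immersion: $dX$ degenerates at $(x,t)$ only when $X(x,t)$ is a focal point of $\Sigma$ along $X(x,\cdot)$, and a focal point at parameter $s\in(0,t]$ would force the cut time $\rho(x)\le s\le t$, contrary to $t<\rho(x)$; since $\dim(\Sigma\times\mathbb{R})=n+1=\dim\mathbb{H}^{n+1}$, $X|_{A^\ast}$ is a local diffeomorphism, in particular an open map. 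It is also injective: if $X(x_1,t_1)=X(x_2,t_2)=p$ with both points in $A^\ast$, then $t_1=v(p)=t_2=:t$ and $X(x_1,\cdot)|_{[0,t]}$, $X(x_2,\cdot)|_{[0,t]}$ are both minimizing geodesics from $\Sigma$ to $p$; if $x_1\ne x_2$ these are distinct minimizers, so by the characterization of cut points $\rho(x_1)\le t$, contradicting $t<\rho(x_1)$. An injective local diffeomorphism is a diffeomorphism onto its (open) image.

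I expect the only genuinely nontrivial input to be the cut-locus theory for the compact hypersurface $\Sigma$ — specifically the lower semicontinuity (in fact continuity) of the cut-distance $\rho$, and the characterization that a point is a cut point of $\Sigma$ precisely when it is a focal point along the normal geodesic or is joined to $\Sigma$ by two distinct minimizing geodesics. These standard results (provable via Jacobi fields and the broken-geodesic argument) are exactly what make $A^\ast$ open and force $X|_{A^\ast}$ to be an injective immersion, so it is cleanest to quote them from the literature, as is done here with \cite{Bre13}.
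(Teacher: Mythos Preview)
The paper does not supply its own proof of this proposition; it is quoted verbatim from Brendle \cite{Bre13} (see the citation attached to the proposition), and the authors simply invoke it. There is therefore no ``paper's own proof'' to compare against.

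Your argument is correct and is exactly the standard route (and essentially Brendle's). Part $(i)$ is the usual ``subarcs of minimizers are minimizers'' observation; part $(ii)$ uses continuity of $v\circ X-t$ for closedness and the first-variation/orthogonality argument for surjectivity onto $\overline{\Omega}$; part $(iii)$ identifies $A^\ast=\{(x,t):0\le t<\rho(x)\}$ and then appeals to the three classical cut-locus facts you name --- lower semicontinuity of the cut-distance $\rho$, absence of focal points before the cut time, and uniqueness of minimizers before the cut time --- to obtain openness, local diffeomorphism, and injectivity respectively. Your closing remark that these three facts are the only nontrivial inputs, and that it is cleanest to quote them, matches precisely how the paper handles the matter (by citing \cite{Bre13} outright). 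Nothing is missing.
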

The set $G=X(A^\ast)$ is the largest open subset of $\Omega$ such that for every $x$ in $G$, there is a unique closest point $y\in \Sigma$ to $x$. Moreover, the boundary $\Sigma$ is smooth, and thus the distance function $d(\cdot,\Sigma):\-\Omega \ra [0,\infty)$ belongs to $C^{\infty}(G\cup \Sigma)\cap \operatorname{Lip}(\Omega)$. The set $C=\Omega\backslash G$ is called the {\em cut locus} of $\Sigma$. A result of Li and Nirenberg \cite[Corollary 1.6]{Li-Nirenberg2005} states that the $n$-dimensional Hausdorff measure of the cut-locus $C$ of the hypersurface $\Sigma$ is finite, i.e., $\mathcal{H}^{n}(C)<\infty$.

Consider a geodesic from a point $y\in \Sigma$ going into $\Omega$ in the normal direction until it first hits a point in $x=m(y)$ in $C$. The point $m(y)$ is called the {\em cut point of $y$} on $\Sigma$, which means that if we go beyond the point $x$ on the geodesic to any other point $x'$, then $x'$ has a closer point $y'\in\Sigma$ such that $d(x',y')<d(x',y)$. The cut locus of $\Sigma$ is the collection of all these cut points $m(y)$ for all $y\in \Sigma$. An equivalent way to define the cut point of $y\in \Sigma$ is as follows: The set of $t>0$ satisfying $d(X(y,t),\Sigma)=t$ is either $(0,\infty)$ or $(0,\~t(y)]$ for some $0<\~t(y)<\infty$. In the latter case, $\~m(y)=X(y,\~t(y))$ is the cut point of $y\in \Sigma$, and the collection of $\~m(y)$ for all $y\in \Sigma$, denoted as $\~C$, is also called the cut locus of $\Sigma$. These two definitions are the same, i.e., $\~m(y)=m(y)$ for all $y\in \Sigma$ and $\~C=C$, see \cite[Section 4]{Li-Nirenberg2005}.

According to \cite[Theorem 1.5]{Li-Nirenberg2005}, as $\Sigma$ is compact, the function $d(y,m(y))$ is Lipschitz continuous in $y\in \Sigma$, and hence $\~t(y)=d(y,X(y,\~t(y)))=d(y,\~m(y))=d(y,m(y))$ is also Lipschitz continuous in $y\in \Sigma$. Let $T=\max_{y\in \Sigma}\{\~t(y)\}<\infty$. Then we have $0\leq v(x)\leq T<\infty$ for all $x\in \-\Omega$. For each $t\in [0,T]$, we define
\begin{align*}
\Sigma_t^\ast=X(A^\ast \cap (\Sigma \times \{t\})).
\end{align*}
Note that $\Sigma_t^\ast$ is a smooth hypersurface which is contained in the level set $\{v=t\}$. Then we have  $\Sigma_0^\ast=\Sigma=\{v=0\}$, and $\Sigma_T^{\ast}=\emptyset$ due to the definition of $A^\ast$. In particular, $\Sigma_t^\ast=\{v=t\} \backslash C$ for all $0<t<T$ and
$$
\Omega\backslash C=X(A^\ast)=\bigcup_{0< t<T}\Sigma_t^\ast.
$$

Denote by $H$ and $h$ the mean curvature and the second fundamental form of the smooth hypersurface $\Sigma_t^\ast$ in hyperbolic space.
\begin{lem}\label{s3:prop-differential-ineq}
	The mean curvature of $\Sigma_t^\ast$ satisfies $H>n$ and the following differential inequality
	\begin{align}\label{s2:evol-quantity}
	\frac{\partial}{\partial t}\(\frac{V-V_{,\nu}}{H-n}\)\leq -\frac{H}{n}\frac{V-V_{,\nu}}{H-n}.
	\end{align}
\end{lem}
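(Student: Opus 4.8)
The plan is to differentiate the quantity $\dfrac{V-V_{,\nu}}{H-n}$ along the unit normal flow on the smooth region $A^{\ast}$, and to reduce both the preservation of $H>n$ and the claimed inequality \eqref{s2:evol-quantity} to the Cauchy--Schwarz inequality $|h|^{2}\ge H^{2}/n$.

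First I would collect the relevant evolution equations. On $A^{\ast}$ the map $X$ is a genuine smooth flow of the form \eqref{s2:general-flow}; by \eqref{s2:unit-normal-flow} and the fact that the unit-speed normal geodesics have parallel velocity, its speed is $\mathcal F\equiv-1$ and the outward unit normal of $\Sigma_{t}^{\ast}$ is the parallel transport of $\nu$. Substituting $\mathcal F=-1$ into Proposition \ref{s2:prop-general-evolution} gives $\partial_{t}\nu=0$ and $\partial_{t}h_{i}^{j}=(h^{2})_{i}^{j}-\delta_{i}^{j}$, hence $\partial_{t}H=|h|^{2}-n$ with $|h|^{2}=\sum_{i}\kappa_{i}^{2}$. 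Next, since $\partial_{t}X=-\nu$ and $\-\nabla V=\sinh r\,\partial_{r}$, I get $\partial_{t}V=\langle\-\nabla V,-\nu\rangle=-V_{,\nu}$; and since $\partial_{t}\nu=0$ together with the standard identity $\-\nabla^{2}V=\-\nabla^{2}(\cosh r)=(\cosh r)\,\-g=V\,\-g$ on $\mathbb H^{n+1}$ (consistent with \eqref{s2.D2V}), I get $\partial_{t}V_{,\nu}=(\-\nabla^{2}V)(-\nu,\nu)=-V$. Therefore $\partial_{t}(V-V_{,\nu})=V-V_{,\nu}$. I would also record that $V-V_{,\nu}>0$ everywhere, since $V_{,\nu}=\langle\sinh r\,\partial_{r},\nu\rangle\le\sinh r<\cosh r=V$.

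Then I would argue that $H>n$ holds on every $\Sigma_{t}^{\ast}$. Each point of $A^{\ast}$ lies on a normal geodesic, contained in $A^{\ast}$, issuing from some $y\in\Sigma$; along it the smooth function $f:=H-n$ has $f(0)>0$ by the standing hypothesis $H>n$ on $\Sigma$, and by Cauchy--Schwarz $\partial_{t}f=|h|^{2}-n\ge\frac1n f(f+2n)$, whose right-hand side is positive wherever $f>0$. Hence $f$ is strictly increasing on any interval where it is positive, so it can never vanish, and $H>n$ on $\Sigma_{t}^{\ast}$. With $H-n>0$ and $V-V_{,\nu}>0$ in hand, the evolution equations above give
$$
\partial_{t}\!\left(\frac{V-V_{,\nu}}{H-n}\right)=\frac{V-V_{,\nu}}{H-n}-\frac{(V-V_{,\nu})(|h|^{2}-n)}{(H-n)^{2}}=\frac{(V-V_{,\nu})(H-|h|^{2})}{(H-n)^{2}},
$$
and, since the factor $\frac{V-V_{,\nu}}{H-n}$ is positive, the inequality \eqref{s2:evol-quantity} is equivalent to $\frac{H-|h|^{2}}{H-n}\le-\frac Hn$, i.e. to $n|h|^{2}\ge H^{2}$ --- Cauchy--Schwarz once more. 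This would complete the proof.

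The computation itself is short and I do not expect a genuine obstacle; the points that need care are fixing the sign $\mathcal F=-1$ together with the induced identity $\partial_{t}h_{i}^{j}=(h^{2})_{i}^{j}-\delta_{i}^{j}$, computing $\partial_{t}V_{,\nu}$ correctly via the ambient Hessian of $\cosh r$, and justifying the propagation of $H>n$ on the merely Lipschitz level sets --- which is legitimate precisely because $X$ restricts to a smooth flow on $A^{\ast}$ and every point of $A^{\ast}$ is joined to $\Sigma$ by a normal geodesic lying in $A^{\ast}$. The fact that both the main inequality and the preservation of $H>n$ collapse to Cauchy--Schwarz also makes the equality case --- total umbilicity of $\Sigma_{t}^{\ast}$ --- transparent.
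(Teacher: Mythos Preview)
Your proposal is correct and follows essentially the same approach as the paper: both compute $\partial_t V=-V_{,\nu}$, $\partial_t V_{,\nu}=-V$, $\partial_t H=|h|^2-n$, use $V-V_{,\nu}>0$, and reduce everything to the trace inequality $|h|^2\ge H^2/n$. The only cosmetic differences are that the paper derives $\partial_t V_{,\nu}=-V$ from the conformal Killing identity \eqref{s3.conf} (equivalent to your use of $\overline\nabla^2 V=V\bar g$) and packages the final step as $\partial_t(H-n)\ge\frac1n(H-n)(H+n)$ before dividing, whereas you carry $|h|^2$ through and cancel at the end.
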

\begin{proof}
    Using Proposition \ref{s2:prop-general-evolution} with $\mathcal F=-1$, the mean curvature of $\Sigma_t^\ast$ evolves by
	\begin{align*}
	\frac{\partial}{\partial t}H=|h|^2-n.
	\end{align*}
	By the trace inequality $|h|^2\geq H^2/n$, we obtain
	\begin{align}\label{s3.pf1}
	\frac{\partial}{\partial t}(H-n)\geq \frac{1}{n}(H-n)(H+n)
	\end{align}
	at each point on $\Sigma_t^\ast$. Since the mean curvature of the initial hypersurface $\Sigma$ satisfies $H>n$, applying the comparison principle to \eqref{s3.pf1} we conclude that $H>n$ on $\Sigma_t^{\ast}$ for each $t\in [0,T)$.
	
    To prove the differential inequality \eqref{s2:evol-quantity}, we use the evolution equation \eqref{s2:evolution-unit-normal} of the unit normal $\nu$ to calculate that
	\begin{align}
	\frac{\partial}{\partial t}V=&\langle \-\nabla V,-\nu\rangle=-V_{,\nu}, \label{s3:eq-V-1}\\
	\frac{\partial}{\partial t}V_{,\nu}=&\langle \-\nabla_{-\nu}(\l \partial_r), \nu\rangle+\langle \l\partial_r,\partial_t\nu\rangle=-\l'=-V,\label{s3:eq-V-2}
	\end{align}
	where we used the fact that $\l\partial_r$ is a conformal Killing vector field, that is,
\begin{equation}\label{s3.conf}
  \langle\overline{\nabla}_X(\lambda\partial_r),Y\rangle=\lambda'\langle X,Y\rangle
\end{equation}
for any tangent vector fields $X,Y$ of $\mathbb{H}^{n+1}$ (see \cite[Lemma 2.2]{Bre13}). Since
\begin{align*}
V-V_{,\nu}=\cosh r-\sinh r\langle \partial_r,\nu\rangle\geq \cosh r-\sinh r>0,
\end{align*}
it follows from \eqref{s3.pf1}--\eqref{s3:eq-V-2} that
	\begin{align*}
	\frac{\partial}{\partial t}\(\frac{V-V_{,\nu}}{H-n}\)=&\frac{1}{H-n}\frac{\partial}{\partial t}(V-V_{,\nu})-\frac{V-V_{,\nu}}{(H-n)^2}\frac{\partial}{\partial t}H \\
\leq &-\frac{H}{n}\frac{V-V_{,\nu}}{H-n}
	\end{align*}
    at each point on $\Sigma_t^\ast$. This completes the proof.	
\end{proof}

\begin{cor}\label{s3:cor-area-decreasing}
	The function $t\mapsto \mu(\Sigma_t^\ast)$ is monotone decreasing.
\end{cor}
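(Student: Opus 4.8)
The plan is to parametrize the smooth pieces $\Sigma_t^\ast$ by the normal exponential map and to track how the induced area element evolves. For $t\ge 0$ put $U_t=\{x\in\Sigma:(x,t)\in A^\ast\}$, so that by part $(iii)$ of the Proposition from \cite{Bre13} recalled above the restriction $X(\cdot,t)\colon U_t\to\Sigma_t^\ast$ is a diffeomorphism. Pulling back the induced Riemannian area form of $\Sigma_t^\ast$ along this map, we may write it as $J(x,t)\,d\mu_\Sigma(x)$ for a smooth positive function $J$ on $A^\ast$ with $J(x,0)=1$, and then
\begin{align*}
\mu(\Sigma_t^\ast)=\int_{U_t}J(x,t)\,d\mu_\Sigma(x).
\end{align*}

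The first step is to show that $t\mapsto J(x,t)$ is non-increasing for each fixed $x$. Since each $\Sigma_t^\ast$ moves under the flow \eqref{s2:unit-normal-flow}, which is the general flow \eqref{s2:general-flow} with speed $\mathcal F=-1$, the evolution equation \eqref{s2:evolution-area} of the area form gives $\partial_t\big(J(x,t)\,d\mu_\Sigma\big)=\partial_t\,d\mu_t=-H\,d\mu_t$, that is, $\partial_tJ(x,t)=-H(X(x,t))\,J(x,t)$ on $A^\ast$. By Lemma \ref{s3:prop-differential-ineq} we have $H>n>0$ on every $\Sigma_t^\ast$, so integrating this ODE along the normal geodesic through $x$ yields $0<J(x,t)\le e^{-n(t-s)}J(x,s)\le J(x,s)$ whenever $s\le t$ and $(x,t)\in A^\ast$; in particular $J(x,t)\le J(x,0)=1$.

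The second step is to observe that the parameter domains are nested, $U_t\subseteq U_s$ for $s\le t$: if $(x,t)\in A^\ast$ then $(x,t+\delta)\in A$ for some $\delta>0$ by definition of $A^\ast$, and writing $t+\delta=s+(t+\delta-s)$ with $t+\delta-s>0$ gives $(x,s)\in A^\ast$. Combining the two steps, for $s\le t$,
\begin{align*}
\mu(\Sigma_t^\ast)=\int_{U_t}J(x,t)\,d\mu_\Sigma\le\int_{U_t}J(x,s)\,d\mu_\Sigma\le\int_{U_s}J(x,s)\,d\mu_\Sigma=\mu(\Sigma_s^\ast),
\end{align*}
where the first inequality uses $J(x,t)\le J(x,s)$ on $U_t$ and the second uses $U_t\subseteq U_s$ together with $J\ge 0$; all integrals are finite since $\mu(\Sigma_t^\ast)\le\int_{U_t}1\,d\mu_\Sigma\le\mu(\Sigma)<\infty$. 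This gives the claimed monotonicity, and the argument also reproves $\mu(\Sigma_t^\ast)\le\mu(\Sigma)$.

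The computation is essentially routine, and I expect the only point requiring care to be the measure-theoretic bookkeeping caused by the shrinking of $U_t$ as $t$ increases — points leave $A^\ast$ exactly when their normal geodesic first meets the cut locus — together with the justification that $X(\cdot,t)$ is genuinely a diffeomorphism of $U_t$ onto the embedded hypersurface $\Sigma_t^\ast$, so that the change of variables is legitimate; both of these are supplied by the structure of $A^\ast$ recalled above.
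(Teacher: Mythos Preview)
Your proof is correct and follows essentially the same approach as the paper: the paper's argument simply notes that the area form satisfies $\partial_t d\mu_t=-H\,d\mu_t$ with $H>n>0$ so the pulled-back area element decreases, and that the parameter set $\{x\in\Sigma:(x,t)\in A^\ast\}$ shrinks with $t$, then concludes. Your version makes both ingredients explicit via the Jacobian $J$ and the nested sets $U_t$, but the logic is identical.
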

\begin{proof}
	On $\Sigma_t^\ast$, the area form $d\mu_t$ satisfies
	\begin{align}\label{s2:evol-area-form}
	\frac{\partial}{\partial t} d\mu_t=-H d\mu_t  \quad \text{on $\Sigma_t^\ast$}.
	\end{align}	
	Since $H>n$ on $\Sigma_t^\ast$, \eqref{s2:evol-area-form} implies that the area form of $\Sigma_t^\ast$ is monotone decreasing in $t$. Moreover, the set $\{x\in \Sigma:(x,t)\in A^\ast\}$ becomes smaller as $t$ increases. Then the assertion follows.
\end{proof}

The following lemma is crucial in the proof of Theorem \ref{s1:main-thm}.
\begin{lem}\label{s3:prop-identity}
	For a.e. $t\in (0,T)$, there holds
	\begin{align}\label{s3.eq3-4}
	\int_{\Sigma_t^\ast} V_{,\nu} d\mu=(n+1)\int_{\{v>t\}} V d\vol.
	\end{align}
\end{lem}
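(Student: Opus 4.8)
The plan is to apply the divergence theorem to the vector field $\overline{\nabla}V$ on the open set $\{v>t\}$, exploiting that $V=\cosh r$ is an eigenfunction of the Laplacian of $\mathbb{H}^{n+1}$. In geodesic polar coordinates $\overline{g}=dr^2+\lambda(r)^2g_{\mathbb S^n}$ with $\lambda=\sinh r$, a radial function $f(r)$ satisfies $\overline{\Delta} f=f''+n\frac{\lambda'}{\lambda}f'$; taking $f=\cosh r$ gives $\overline{\Delta} V=\cosh r+n\cosh r=(n+1)V$. Hence $(n+1)\int_{\{v>t\}}V\,d\vol=\int_{\{v>t\}}\overline{\Delta} V\,d\vol$, and once the divergence theorem is justified on $\{v>t\}$ its right-hand side equals $\int_{\partial^{\ast}\{v>t\}}\langle\overline{\nabla}V,N\rangle\,d\mathcal{H}^n$, where $N$ is the measure-theoretic outward unit normal of $\{v>t\}$. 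On the smooth part $\Sigma_t^\ast$ of $\{v=t\}$ one has $N=-\overline{\nabla}v=\nu$, the unit normal of $\Sigma_t^\ast$ in the flow convention $\partial_t X=-\nu$ (indeed $\overline{\nabla}v=\partial_tX$ there, and it points into $\{v>t\}$, so $N$ points back towards $\Sigma$); therefore $\langle\overline{\nabla}V,N\rangle=\langle\overline{\nabla}V,\nu\rangle=V_{,\nu}$ on $\Sigma_t^\ast$. Thus \eqref{s3.eq3-4} will follow provided the boundary integral is carried by $\Sigma_t^\ast$ up to an $\mathcal{H}^n$-null set, i.e. the cut locus $C$ contributes nothing.

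The key technical point is therefore to show that $\mathcal{H}^n(\{v=t\}\cap C)=0$ for a.e. $t\in(0,T)$. By \cite[Corollary 1.6]{Li-Nirenberg2005} we have $\mathcal{H}^n(C)<\infty$, hence $\mathcal{H}^{n+1}(C)=0$; consequently $|\overline{\nabla}v|=1$ at $\mathcal{H}^{n+1}$-a.e. point of $\Omega$ (it equals $1$ on $G=\Omega\setminus C$, where $v$ is smooth) and $\int_{\overline{\Omega}}g\,d\vol=\int_{G}g\,d\vol$ for every $g\in L^1$. Applying the coarea formula to the Lipschitz function $v$ on $\overline{\Omega}$ gives $\int_{\overline{\Omega}}g\,d\vol=\int_0^T\big(\int_{\{v=s\}}g\,d\mathcal{H}^n\big)ds$, while applying it on $G$, where $v$ is smooth with unit gradient and $v^{-1}(s)\cap G=\Sigma_s^\ast$ for $s\in(0,T)$, gives $\int_{\overline{\Omega}}g\,d\vol=\int_{G}g\,d\vol=\int_0^T\big(\int_{\Sigma_s^\ast}g\,d\mu\big)ds$. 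Taking $g\equiv1$ and subtracting, and using that $\{v=s\}=\Sigma_s^\ast\sqcup(\{v=s\}\cap C)$, yields $\int_0^T\mathcal{H}^n(\{v=s\}\cap C)\,ds=0$, so the (nonnegative, measurable) integrand vanishes for a.e. $s\in(0,T)$.

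Finally, fix $t\in(0,T)$ with $\mathcal{H}^n(\{v=t\}\cap C)=0$ and such that $\{v>t\}$ has finite perimeter; the latter holds for a.e. $t$ by the coarea formula since $\int_{\overline{\Omega}}|\overline{\nabla}v|\,d\vol=\vol(\Omega)<\infty$. As $\{v>t\}$ is open and $v$ continuous, $\partial\{v>t\}\subseteq\{v=t\}=\Sigma_t^\ast\sqcup(\{v=t\}\cap C)$, hence $\mathcal{H}^n(\partial^{\ast}\{v>t\}\setminus\Sigma_t^\ast)=0$; conversely each point of $\Sigma_t^\ast$ is a smooth boundary point of $\{v>t\}$ (there $v$ is smooth with $|\overline{\nabla}v|=1$), so it lies in $\partial^{\ast}\{v>t\}$ with outward normal $-\overline{\nabla}v=\nu$. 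The divergence theorem for sets of finite perimeter, applied to the smooth vector field $\overline{\nabla}V$ on $\{v>t\}$, then gives
\[
(n+1)\int_{\{v>t\}}V\,d\vol=\int_{\{v>t\}}\overline{\Delta}V\,d\vol=\int_{\partial^{\ast}\{v>t\}}\langle\overline{\nabla}V,N\rangle\,d\mathcal{H}^n=\int_{\Sigma_t^\ast}V_{,\nu}\,d\mu ,
\]
which is exactly \eqref{s3.eq3-4}. I expect the only delicate step to be the identification of $\partial^{\ast}\{v>t\}$ with $\Sigma_t^\ast$ up to $\mathcal{H}^n$-null sets — that is, ruling out a cut-locus contribution to the boundary flux for a.e. $t$; once the coarea comparison above is in hand, everything else is the divergence theorem together with $\overline{\Delta}V=(n+1)V$.
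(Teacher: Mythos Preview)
Your proof is correct and follows the same overall strategy as the paper: establish that for a.e.\ $t$ the cut locus meets $\{v=t\}$ in an $\mathcal H^n$-null set, and then apply a Gauss--Green/divergence theorem together with $\overline\Delta V=(n+1)V$. The execution differs in two minor respects: for the null-set claim the paper argues by contradiction from $\mathcal H^n(C)<\infty$ via a counting argument, while your coarea comparison on $\Omega$ versus $G$ is a clean alternative; and for the divergence step the paper passes to the Poincar\'e ball model $(\mathbb B^{n+1},f^2\delta)$ and applies the Euclidean Gauss--Green theorem for open sets with almost $C^1$ boundary, whereas you invoke the divergence theorem for sets of finite perimeter directly on $\mathbb H^{n+1}$. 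Both routes are valid; the paper's conformal detour has the advantage of citing a textbook Euclidean statement verbatim, while yours is more intrinsic.
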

\begin{proof}
	Let $(\mathbb B^{n+1},\d)$ be the unit open ball centered at the origin in $\mathbb R^{n+1}$ equipped with the Euclidean metric $\d$. We use the Poincar\'e ball model to represent the hyperbolic space $\mathbb H^{n+1}$, that is,
	\begin{align*}
	(\mathbb H^{n+1},\-g)=\(\mathbb B^{n+1},f^2\d\), \quad f^2=\dfrac{4}{(1-|x|_\d^2)^2},
	\end{align*}
	where $|x|_\d=\tanh\(\frac{r}{2}\)\in [0,1)$ and $r=r(x)$ is the hyperbolic distance to the origin $o$. It is easy to check that $f=\cosh r+1$.

Let $\Omega$ be the bounded domain in $\mathbb H^{n+1}$ with smooth boundary $\Sigma$, then it can be also viewed as a smooth bounded domain in $\mathbb B^{n+1}$. For each $0<t<T$, the level set $\{v=t\}$, which is the topological boundary of $\{v>t\}$, is a solution of the unit normal flow \eqref{s2:unit-normal-flow}.
	
	{\bf Claim}: For a.e. $t\in (0,T)$, there holds
	\begin{align}\label{s3:t-condition-1}
	\mathcal{H}^n(\{v=t\}\backslash \Sigma_t^\ast)=0.
	\end{align}
	\begin{proof}[Proof of Claim] It follows from \cite[Corollary 1.6]{Li-Nirenberg2005} that $\mathcal{H}^n(C)<\infty$. Suppose \eqref{s3:t-condition-1} does not hold, there exists uncountably many $\{t_\alpha\}_{\a\in \Lambda}\in (0,T)$ such that
	$\mathcal{H}^n(\{v=t\}\backslash \Sigma_{t_\alpha}^\ast)>0$. It follows that
	\begin{align*}
	\mathcal{H}^n(C)=&\mathcal{H}^n\(\bigcup_{0<t<T}(\{v=t\}\backslash\Sigma_t^\ast)\)\\
\geq &\sum_{\a\in \Lambda}\mathcal{H}^n(\{v=t\}\backslash \Sigma_{t_\alpha}^\ast)=\infty,
	\end{align*}
	which contradicts with $\mathcal{H}^n(C)<\infty$.
\end{proof}
	
	We continue the proof of Lemma \ref{s3:prop-identity}. Since $v$ is Lipschitz continuous, by \cite[Theorem 13.1 and Example 13.3]{Maggi2012} we conclude that for a.e. $t\in (0,T)$, the super-level set $\{v>t\}$ is an open set of finite perimeter in $\mathbb R^{n+1}$. The reduced boundary $\partial^\ast \{v>t\}$ of $\{v>t\}$ exists and it is always a subset of the topological boundary $\{v=t\}$, see \cite[Remark 15.3]{Maggi2012}. Moreover, by the coarea formula for Lipschitz functions, we have
	\begin{align}\label{s3:t-condition-2}
	\mathcal{H}^n(\partial^\ast \{v>t\})=\mathcal{H}^n(\{v=t\}), \quad \text{for a.e. $t\in [0,T)$},
	\end{align}
	see \cite[Theorem 18.1 and Remark 18.2]{Maggi2012}. Recall that the reduced boudary $\partial^\ast \{v>t\}$ is the union of countably many compact subsets $K_i$ of $C^1$-hypersurfaces such that $\partial^\ast \{v>t\}=N \cup \bigcup_{i\geq 1}K_i$, where $\mathcal{H}^n(N)=0$. It follows from \eqref{s3:t-condition-1} and \eqref{s3:t-condition-2} that for a.e. $t\in (0,T)$,
	$\{v=t\}=\Sigma_t^\ast \cup Z$ with $\mathcal{H}^n(Z)=0$ and hence $\{v=t\}$ is an open set with almost $C^1$-boundary.
	
	Let $\{E_i\}_{i=1}^{n+1}$ be an orthonormal basis in $(\mathbb B^{n+1},\d)$. Then $\{\-E_i=f^{-1}E_i\}_{i=1}^{n+1}$ is an orthonormal basis of $(\mathbb B^{n+1},f^2\d)$. Denote by $\-\nabla$ (resp. $\nabla^\d$) and $\-\D$ (resp. $\D_\d$) the gradient and the Laplacian with respect to $\-g=f^2\d$ (resp. $\d$). For any function $F\in C^\infty(\mathbb B^{n+1},f^2\d)$, it can also be considered as a function $F\in C^\infty(\mathbb B^{n+1},\d)$. Then there holds
	\begin{align*}
	\nabla^\d F=\sum_{i=1}^{n+1}E_i(F)E_i=f^2 \sum_{i=1}^{n+1}\-E_i(F)\-E_i=f^2 \-\nabla F.
	\end{align*}
	Using the transformation law of the Laplacian for $\-g=f^2 \d$, one has
	\begin{align}\label{s3:conformal-Laplacian}
	\-\D V=f^{-2}\(\D_\d V+(n-1)d\ln f(\nabla^\d V)\).
	\end{align}	
	We also have
	\begin{align}\label{s3:conformal-relation}
	d\vol=f^{n+1}d\vol_{\d}, \quad d\mu=f^n d\mu_\d, \quad \nu=f^{-1} \nu_\d,
	\end{align}
	where $d\vol_\d$ is Euclidean volume form, $d\mu_\d$ is the area form and $\nu_\d$ is the unit outward normal of $\Sigma$ in $(\mathbb B^{n+1},\d)$.

Using the Gauss-Green theorem on open sets with almost $C^1$-boundary in Euclidean space \cite[Theorem 9.6]{Maggi2012}, there holds
	\begin{align}\label{s3:Gauss-Green-formula}
	\int_{\Sigma_t^\ast} f^{n-1} dV(\nu_\d) d\mu_\d=&\int_{\{v>t\}} \divv^\d (f^{n-1}\nabla^\d V) d\vol_\d \nonumber\\
	=&\int_{\{v>t\}} \(f^{n-1} \D_\d V+(n-1)f^{n-2} df(\nabla^\d V)\) d\vol_\d.
	\end{align}
Applying \eqref{s3:conformal-Laplacian} and \eqref{s3:conformal-relation} to \eqref{s3:Gauss-Green-formula}, we obtain
	\begin{align*}
	\int_{\{v>t\}}\-\D V d\vol=&\int_{\{v>t\}} \(f^{n-1}\D_\d V+(n-1)f^{n-2}df(\nabla^\d V) \)d\vol_\d\\
	=&\int_{\Sigma_t^\ast} f^{n-1}dV(\nu_\d) d\mu_\d\\
	=&\int_{\Sigma_t^\ast} \langle \-\nabla V,\nu\rangle d\mu.
	\end{align*}
Since $\-\D V=(n+1)V$ (which follows from the conformal property \eqref{s3.conf}, see also \cite{Montiel-Ros1991}), we obtain the equation \eqref{s3.eq3-4}.
\end{proof}

Next, we introduce the quantity
\begin{align}\label{s3:def-Q(t)}
Q(t)=e^{(n+1)t}\left[\int_{\Sigma_t^\ast}\frac{V-V_{,\nu}}{H-n}d\mu-\frac{n+1}{n}\int_{\{v>t\}} V d\vol\right].
\end{align}
We have the following monotonicity property.
\begin{prop}\label{s3:monotonicity}
	For any $t\in [0,T)$, there holds
	\begin{align*}
	Q(0)\geq Q(t).
	\end{align*}
\end{prop}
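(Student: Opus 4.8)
The plan is to prove the stronger statement that $Q$, defined in \eqref{s3:def-Q(t)}, is monotone non-increasing on $[0,T)$; the proposition is then the special case comparing $Q(t)$ with $Q(0)$. Abbreviate $f(t)=\int_{\Sigma_t^\ast}\frac{V-V_{,\nu}}{H-n}\,d\mu$ and $g(t)=\int_{\{v>t\}}V\,d\vol$, so $Q(t)=e^{(n+1)t}\big(f(t)-\tfrac{n+1}{n}g(t)\big)$; note that the integrand of $f$ is a positive smooth function on $\Sigma_t^\ast$, since $H>n$ there by Lemma~\ref{s3:prop-differential-ineq} and $V-V_{,\nu}=\cosh r-\sinh r\langle\partial_r,\nu\rangle>0$. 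I would analyze $g$ and $f$ separately. For $g$: since $v=d(\cdot,\Sigma)$ is Lipschitz on $\-\Omega$ with $|\-\nabla v|=1$ a.e., the coarea formula gives $g(0)-g(t)=\int_0^t\!\big(\int_{\{v=s\}}V\,d\mathcal H^n\big)\,ds$, so $g$ is Lipschitz; by the Claim proved inside Lemma~\ref{s3:prop-identity}, $\mathcal H^n(\{v=s\}\setminus\Sigma_s^\ast)=0$ for a.e.\ $s$, and hence $g'(s)=-\int_{\Sigma_s^\ast}V\,d\mu$ for a.e.\ $s$. Together with the Minkowski-type identity of Lemma~\ref{s3:prop-identity}, namely $(n+1)g(s)=\int_{\Sigma_s^\ast}V_{,\nu}\,d\mu$ for a.e.\ $s$, the volume term is thereby completely pinned down.

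For $f$ I would pull everything back to $\Sigma$ via the normal exponential map, writing $f(t)=\int_{U_t}(\phi\Psi)(x,t)\,dx$, where $U_t=\{x\in\Sigma:(x,t)\in A^\ast\}$, $\phi=\frac{V-V_{,\nu}}{H-n}>0$, and $\Psi>0$ denotes the area density of $\Sigma_t^\ast$ pulled back to $\Sigma$. Along each normal geodesic, $\partial_t\Psi=-H\Psi$ by Proposition~\ref{s2:prop-general-evolution} and $\partial_t\phi\le-\tfrac{H}{n}\phi$ by Lemma~\ref{s3:prop-differential-ineq}, so $\partial_t(\phi\Psi)\le-\tfrac{n+1}{n}H\phi\Psi<0$, using $H>n>0$. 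Since the sets $U_t$ are nested and decreasing in $t$, this at once shows that $t\mapsto f(t)$ is monotone non-increasing; moreover, estimating $f(t+h)-f(t)$ by integrating $\partial_t(\phi\Psi)$ over $U_{t+h}$ and letting $h\downarrow0$ yields, for a.e.\ $t$,
\begin{equation*}
f'(t)\le-\frac{n+1}{n}\int_{\Sigma_t^\ast}\frac{H(V-V_{,\nu})}{H-n}\,d\mu=-\frac{n+1}{n}\int_{\Sigma_t^\ast}(V-V_{,\nu})\,d\mu-(n+1)f(t),
\end{equation*}
where the last step uses the algebraic identity $\frac{H(V-V_{,\nu})}{H-n}=(V-V_{,\nu})+n\,\frac{V-V_{,\nu}}{H-n}$; the summand $-(n+1)f(t)$ is precisely what compensates the growth of the factor $e^{(n+1)t}$ in $Q$.

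Combining the pieces, for a.e.\ $t$ the function $Q$ is differentiable, and substituting the above expressions for $f'(t)$, $g'(t)$ and $(n+1)g(t)$ into $Q'(t)=e^{(n+1)t}\big[(n+1)\big(f-\tfrac{n+1}{n}g\big)+f'-\tfrac{n+1}{n}g'\big]$, a short computation shows that all terms cancel and $Q'(t)\le0$. The one delicate point — and the step I expect to be the main obstacle — is that $Q$ is not absolutely continuous: because $v$ is only Lipschitz, $f$ may jump \emph{downward} at those $t$ for which a piece of $\{v=t\}$ of positive $\mathcal H^n$-measure lies in the cut locus. This is resolved using that $f$ is \emph{monotone} non-increasing, hence of bounded variation with non-positive singular part, while $e^{(n+1)t}g(t)$ is Lipschitz; consequently the Lebesgue decomposition of $dQ$ consists of an absolutely continuous part with density $Q'(t)\le0$ a.e.\ together with a singular part equal to $e^{(n+1)t}(df)_{\mathrm{sing}}\le0$. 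Hence $dQ\le0$ on $[0,T)$, so $Q$ is monotone non-increasing, and in particular $Q(0)\ge Q(t)$ for all $t\in[0,T)$.
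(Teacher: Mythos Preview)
Your proposal is correct and follows essentially the same approach as the paper: both arguments differentiate $Q$ using the evolution equations \eqref{s2:evol-quantity}, \eqref{s2:evol-area-form}, the coarea formula for the weighted volume, and the Minkowski-type identity of Lemma~\ref{s3:prop-identity} to show the derivative is nonpositive. The only difference is in how the passage from the pointwise differential inequality to global monotonicity is justified---the paper bounds the upper left Dini derivative $\limsup_{h\searrow 0}\frac{1}{h}(Q(t)-Q(t-h))$ directly (the domain shrinking from $U_{t-h}$ to $U_t$ discards a nonnegative contribution) and then integrates, whereas you first establish that $f$ is monotone and invoke the BV decomposition; these are equivalent, and your treatment is arguably the more careful of the two.
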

\begin{proof}
	By the coarea formula, we have
	\begin{align*}
	\int_{\{v>t\}} V d\vol=\int_{X(A^\ast \cap (\Sigma \times (t,T)))} V d\vol=\int_t^{T} \int_{\Sigma_\tau^\ast} V d\mu d\tau,
	\end{align*}
	and hence
	\begin{align}\label{s2:evol-weighted-volume}
	\frac{d}{dt}\int_{\{v>t\}} V d\vol=-\int_{\Sigma_t^\ast} V d\mu_t.
	\end{align}
	Using \eqref{s2:evol-quantity}, \eqref{s2:evol-area-form} and \eqref{s2:evol-weighted-volume}, we obtain
	\begin{align}\label{s3.pf2}
	&\limsup_{h \searrow 0}\frac{1}{h}\(Q(t)-Q(t-h)\)\nonumber\\
	\leq &(n+1)Q(t)+e^{(n+1)t}\Bigg(\int_{\Sigma_t^\ast}\frac{\partial}{\partial t}\(\frac{V-V_{,\nu}}{H-n}\)d\mu_t\nonumber\\
&\qquad -\int_{\Sigma_t^\ast}\frac{V-V_{,\nu}}{H-n}Hd\mu_t+\frac{n+1}{n}\int_{\Sigma_t^\ast} V d\mu_t\Bigg)\nonumber\\
	\leq &\frac{n+1}{n}e^{(n+1)t}\left(\int_{\Sigma_t^\ast}V_{,\nu}d\mu_t-(n+1)\int_{\{v>t\}}Vd\vol\right).
	\end{align}
	By Lemma \ref{s3:prop-identity}, the last line of \eqref{s3.pf2} vanishes for a.e. $t\in [0,T)$. It follows that for any $t\in [0,T)$,
\begin{align*}
  Q(t)-Q(0)\leq &\int_0^t\frac{n+1}{n}e^{(n+1)\tau}\left(\int_{\Sigma_\tau^\ast}V_{,\nu}d\mu_t-(n+1)\int_{\{v>\tau\}}Vd\vol\right)d\tau\\
  \leq &0.
\end{align*}
\end{proof}

Now we can complete the proof of Theorem \ref{s1:main-thm}.
\begin{proof}[Proof of Theorem \ref{s1:main-thm}]
	By Proposition \ref{s3:monotonicity}, we have
	\begin{align}\label{s3:comparison}
	\int_{\Sigma}\frac{V-V_{,\nu}}{H-n}d\mu-\frac{n+1}{n}\int_{\Omega}V d\vol=Q(0) \geq \liminf_{t\ra T} Q(t),
	\end{align}
    where we used the fact that $\Sigma=\Sigma_0^\ast$ and $\Omega=\{v>0\}$. To prove the desired inequality \eqref{s1:shifted-Heintze-Karcher}, it suffices to show that $\liminf_{t\ra T} Q(t)\geq 0$.

    Since $V-V_{,\nu}>0$ and $H>n$ on $\Sigma_t^\ast$ (by Lemma \ref{s3:prop-differential-ineq}), we deduce that for a.e. $t\in [0,T)$,
	\begin{align*}
	Q(t)\geq& -\frac{n+1}{n}e^{(n+1)T}\int_{\{v>t\}} V d\vol \\
	=&-\frac{n+1}{n}e^{(n+1)T}\int_{X(A^\ast\cap (\Sigma \times (t,T))) }V d\vol\\
	= &-\frac{n+1}{n}e^{(n+1)T}\int_{t}^{T} \int_{\Sigma_\tau^\ast}V d\mu d\tau.
	\end{align*}
	As $d\mu_t$ is monotone decreasing by \eqref{s2:evol-area-form}, $V=\cosh r$ is uniformly bounded on $\Omega$ and $T<\infty$, we conclude that
\begin{equation*}
  \liminf_{t\ra T}Q(t)\geq 0.
\end{equation*}

    If equality holds in \eqref{s1:shifted-Heintze-Karcher}, then the equality holds in the differential inequality \eqref{s2:evol-quantity}, which implies that $\Sigma_t^\ast$ is totally umbilical for all $t\in [0,T)$. Therefore, the initial hypersurface $\Sigma=\Sigma_0^\ast$ is umbilical. This completes the proof of Theorem \ref{s1:main-thm}.
\end{proof}

\section{Proof of Theorems \ref{s1:thm-application-1} and \ref{s1:thm-application-2}}\label{sec:4}
\begin{proof}[Proof of Theorem \ref{s1:thm-application-1}]
	Fix $k\in \{2,\cdots,n\}$. Since $\Sigma$ is compact, there exists an elliptic point such that all principal curvatures $\k_i>1$ for $i=1,\cdots,n$ and hence the constant $E_k(\~{\k})$ is positive. By continuity of the principal curvatures, we have $\~{\k}\in \G_k^{+}$. Then we have
	\begin{align}\label{s4.pf1}
	\int_{\Sigma} (V-V_{,\nu}) \frac{E_{k-1}(\~{\k})}{E_k(\~{\k})}d\mu=&\frac{1}{E_k(\~{\k})}\int_{\Sigma} (V-V_{,\nu}) E_{k-1}(\~{\k})d\mu\nonumber\\
=&\frac{1}{E_k(\~{\k})}\int_{\Sigma} V_{,\nu} E_k(\~{\k})d\mu\nonumber\\
	=&\int_{\Sigma} V_{,\nu} d\mu,
	\end{align}
	where in the second equality we used Minkowski identity \eqref{s2:shift-MF}.
	By the Minkowski formula (see \cite{Montiel-Ros1991}), we also have
\begin{equation}\label{s4.Mink}
\int_{\Sigma}V_{,\nu}d\mu=(n+1)\int_\Omega V d\mathrm{vol},
\end{equation}
where $\Omega$ is the domain enclosed by $\Sigma$. Then the Heintze-Karcher type inequality \eqref{s1:shifted-Heintze-Karcher} and the Newton-MacLaurin inequality \eqref{s2:NM-ineq-1} imply
	\begin{align}\label{s4.pf2}
	\int_{\Sigma} (V-V_{,\nu}) \frac{E_{k-1}(\~{\k})}{E_k(\~{\k})}d\mu\geq& \int_{\Sigma} \frac{V-V_{,\nu}}{E_1(\~{\k})}d\mu\nonumber\\
\geq & (n+1)\int_\Omega V d\mathrm{vol}\nonumber\\
=&\int_{\Sigma}V_{,\nu}d\mu.
	\end{align}
	Combining \eqref{s4.pf1} and \eqref{s4.pf2}, we conclude that the equality holds in the Newton-MacLaurin inequality \eqref{s2:NM-ineq-1}, so $\Sigma$ is totally umbilical and it is a geodesic sphere.
\end{proof}

\begin{proof}[Proof of Theorem \ref{s1:thm-application-2}]
	As the hypersurface $\Sigma$ is uniformly $h$-convex, we have $\~{\k}_i>0$, $E_k(\~\k)>0$ and $\frac{\partial E_k(\~{\k})}{\partial \~\k_i}>0$. We deduce from the Heintze-Karcher type inequality \eqref{s1:shifted-Heintze-Karcher} and \eqref{s4.Mink} that
	\begin{align}\label{s4.1}
	0\leq &\int_{\Sigma} \(\frac{V-V_{,\nu}}{E_1(\~{\k})}-V_{,\nu}\)d\mu\nonumber\\
	\leq &\int_{\Sigma} \frac{(V-V_{,\nu})E_{k-1}(\~{\k})-V_{,\nu} E_{k}(\~{\k})}{E_k(\~{\k})}d\mu\nonumber\\
	=&\int_{\Sigma}\frac{1}{\chi}\Big((V-V_{,\nu})E_{k-1}(\~{\k})-V_{,\nu} E_{k}(\~{\k})\Big)d\mu,
	\end{align}
	where the second inequality follows from $V>V_{,\nu}$ and the Newton-MacLaurin inequality \eqref{s2:NM-ineq-1}, and the last equality follows from the curvature equation \eqref{s1.cond}. Using \eqref{s2.div} with $\varepsilon=1$, we can rewrite the last line of \eqref{s4.1} as
\begin{align}\label{s4.2}
&\int_{\Sigma}\frac{1}{\chi}\Big((V-V_{,\nu})E_{k-1}(\~{\k})-V_{,\nu} E_{k}(\~{\k})\Big)d\mu\nonumber\\
=&\frac{1}{k}\int_{\Sigma}\frac{1}{\chi}\sum_{i=1}^n \frac{\partial E_k(\~{\k})}{\partial\~\k_i}\nabla_i\nabla_iVd\mu\nonumber\\
	=&\frac{1}{k}\int_{\Sigma} \frac{\chi'}{\chi^2}\sum_{i=1}^n \frac{\partial E_k(\~{\k})}{\partial \~\k_i} \nabla_i V\nabla_i(V-V_{,\nu})d\mu\nonumber\\
	=&-\frac{1}{k}\int_{\Sigma} \frac{\chi'}{\chi^2}\sum_{i=1}^n \frac{\partial E_k(\~{\k})}{\partial \~\k_i}\~{\k}_i |\nabla_i V|^2d\mu\leq 0,
	\end{align}
	where we used integration by parts and the divergence-free property of $E_k(\~\k)$ in the second equality, and the third equality follows from \eqref{s2.Dv}. The last inequality is due to $\chi>0$ and $\chi'\geq 0$.

Combining \eqref{s4.1} and \eqref{s4.2}, we see that both the Heintze-Karcher type inequality \eqref{s1:shifted-Heintze-Karcher} and Newton-MacLaurin inequality \eqref{s2:NM-ineq-1} must achieve the equality. This implies that $\Sigma$ is a geodesic sphere. Furthermore, if $\chi'>0$, the last inequality of \eqref{s4.2} implies that $\nabla_iV=0$ for all $i=1,\cdots,n$. Then the unit normal $\nu$ is proportional to the radial direction $\partial_r$, and hence $\Sigma$ must be a geodesic sphere centered at the origin.	
\end{proof}

\begin{bibdiv}
\begin{biblist}

\bib{Alexandrov1956}{article}{
	author={Alexandrov, A.D.},
	title={Uniqueness theorems for surfaces in the large I},
	journal={Vestn. Leningr. Univ.},
	volume={11},
	year={1956},
	pages={5--17},
}


\bib{ACW2021}{article}{
   author={Andrews, B.},
   author={Chen, X.},
   author={Wei, Y.},
   title={Volume preserving flow and Alexandrov-Fenchel type inequalities in  hyperbolic space},
   journal={J. Eur. Math. Soc. (JEMS)},
   volume={23},
   date={2021},
   number={7},
   pages={2467--2509},
}

\bib{AW2018}{article}{
	author={Andrews, B.},
	author={Wei, Y.},
	title={Quermassintegral preserving curvature flow in hyperbolic space},
	journal={Geom. Funct. Anal.},
	volume={28},
	year={2018},
	pages={1183--1208},
}

\bib{BM99}{article}{
   author={Borisenko, A. A.},
   author={Miquel, V.},
   title={Total curvatures of convex hypersurfaces in hyperbolic space},
   journal={Illinois J. Math.},
   volume={43},
   date={1999},
   number={1},
   pages={61--78},
}

\bib{Bre13}{article}{
	author={Brendle, S.},
	title={Constant mean curvature surfaces in warped product manifolds},
	journal={Publications math\'ematiques de l'IH\'ES},
	volume={117},
	year={2013},
	pages={247--269},
}

\bib{CabezasRivas-Miquel2007}{article}{
	author={Cabezas-Rivas, E.},
	author={Miquel, V.},
	title={Volume preserving mean curvature flow in hyperbolic space},
	journal={Indiana Univ. Math. J.},
	year={2007},
	pages={2061--2086},
}

%


\bib{Esp09}{article}{
   author={Espinar, J. M.},
   author={G\'{a}lvez, J. A.},
   author={Mira, P.},
   title={Hypersurfaces in $\Bbb H^{n+1}$ and conformally invariant
   equations: the generalized Christoffel and Nirenberg problems},
   journal={J. Eur. Math. Soc. (JEMS)},
   volume={11},
   date={2009},
   number={4},
   pages={903--939},
}

\bib{Guan14}{article}{
   author={Guan, P.},
   title={Curvature measures, isoperimetric type inequalities and fully
   nonlinear PDEs},
   conference={
      title={Fully nonlinear PDEs in real and complex geometry and optics},
   },
   book={
      series={Lecture Notes in Math.},
      volume={2087},
      publisher={Springer, Cham},
   },
   date={2014},
   pages={47--94},
}

\bib{Guan-Li2014}{article}{
	author={Guan, P.},
	author={Li, J.},
	title={A mean curvature type flow in space forms},
	journal={Int. Math. Res. Not.},
	volume={2015},
	year={2015},
	pages={4716--4740},
}

\bib{Hardy-Littlewood-Polya1934}{book}{
	author={Hardy, G.H.},
	author={Littlewood, J.E.},
	author={Polya, G.},
	title={Inequalities},
	publisher={Cambridge Univ. Press},
	year={1934},
}

\bib{Heintze-Karcher1978}{article}{
	author={Heintze, E.},
	author={Karcher, H.},
	title={A general comparison theorem with applications to volume estimates for submanifolds},
	journal={Ann. Sci. Ecole Norm. Sup.},
	volume={11},
	year={1978},
	pages={451--470},
}

\bib{Hsiung1954}{article}{
	author={Hsiung, C.C.},
	title={Some integral formulas for closed hypersurfaces},
	journal={Math. Scand.},
	volume={2},
	year={1954},
	pages={286--294},
}


\bib{HL22}{article}{
	author={Hu, Y.},
	author={Li, H.},
	title={Geometric inequalities for static convex domains in hyperbolic space},
	journal={Trans. Amer. Math. Soc.},
	volume={376},
	number={8},
	pages={5587--5615},
	year={2022},
}

\bib{HLW20}{article}{
	author={Hu, Y.},
	author={Li, H.},
	author={Wei, Y.},
	title={Locally constrained curvature flows and geometric inequalities in hyperbolic space},
	journal={Math. Ann.},
	volume={382},
	year={2022},
	pages={1425--1474},
}

\bib{HP99}{article}{
   author={Huisken, G.},
   author={Polden, A.},
   title={Geometric evolution equations for hypersurfaces},
   conference={
      title={Calculus of variations and geometric evolution problems},
      address={Cetraro},
      date={1996},
   },
   book={
      series={Lecture Notes in Math.},
      volume={1713},
      publisher={Springer, Berlin},
   },
   date={1999},
   pages={45--84},
}

\bib{Li-Nirenberg2005}{article}{
	author={Li, Y.},
	author={Nirenberg, L.},
	title={The distance function to the boundary, Finsler geometry, and the singular set of viscosity solution},
	year={2005},
	journal={Comm. Pure Appl. Math.},
	volume={58},
	pages={85--146},
}
%

\bib{Li-Xia2019}{article}{
	author={Li, J.},
	author={Xia, C.},
 	title={An integral formula and its applications on sub-static manifolds},
 	journal={J. Differential Geom.},
 	volume = {113},
    number = {2},
	pages = {493--518},
 	year = {2019},
}

\bib{Li-Xu2022}{article}{
	author={Li, H.},
	author={Xu, B.},
	title={Hyperbolic $p$-sum and horospherical $p$-Brunn-Minkowski theory in hyperbolic space},
	eprint={arXiv:2211.06875},
	year={2022},
}

\bib{Maggi2012}{book}{
   author={Maggi, F.},
   title={Sets of finite perimeter and geometric variational problems},
   series={Cambridge Studies in Advanced Mathematics},
   volume={135},
   note={An introduction to geometric measure theory},
   publisher={Cambridge University Press, Cambridge},
   date={2012},
   pages={xx+454},
   isbn={978-1-107-02103-7},
}

\bib{Montiel-Ros1991}{article}{
   author={Montiel, S.},
   author={Ros, A.},
   title={Compact hypersurfaces: the Alexandrov theorem for higher order
   mean curvatures},
   conference={
      title={Differential geometry},
   },
   book={
      series={Pitman Monogr. Surveys Pure Appl. Math.},
      volume={52},
      publisher={Longman Sci. Tech., Harlow},
   },
   date={1991},
   pages={279--296},
}

\bib{Qiu-Xia2015}{article}{
	author={Qiu, G.},
	author={Xia, C.},
	title={A generalization of Reilly's formula and its applications to a new Heintze-Karcher type inequality},
	journal={Int. Math. Res. Not. IMRN},
	volume = {2015},
	number = {17},
	pages = {7608--7619},
	year = {2015},
}

\bib{Reilly1973}{article}{
	author={Reilly, R.C. },
	title={On the Hessian of a function and the curvatures of its graph},
	journal={Michgan Math. J.},
	volume={20},
	year={1973},
	pages={373--383},
}

\bib{Reilly1977}{article}{
	author={Reilly, R.C. },
	title={Applications of the Hessian operator in a Riemannian manifold},
	journal={Indiana Univ. Math. J.},
	volume={26},
	year={1977},
	pages={459--472},
}

\bib{Suss1952}{article}{
	author={S\"uss, W.},
	title={Uber Kennzeichnungen der Kugeln und Affinesph\"aren durch Herrn K.P. Grotemeyer},
	journal={Arch. Math.},
	volume={3},
	year={1952},
	pages={311--313},
}


\bib{Wang-Wei-Zhou2022}{article}{
   author={Wang, X.},
   author={Wei, Y.},
   author={Zhou, T.},
   title={Shifted inverse curvature flows in hyperbolic space},
   journal={Calc. Var. Partial Differential Equations},
   volume={62},
   date={2023},
   number={3},
   pages={Paper No. 93, 44 pp},
}

\bib{Wang-Xia2014}{article}{
	author={Wang, G.},
	author={Xia, C.},
	title={Isoperimetric type problems and Alexandrov-Fenchel type inequalities in the hyperbolic space},
	journal={Adv. Math.},
	year={2014},
	volume={259},
	pages={532--556},
}

\end{biblist}
\end{bibdiv}

\end{document}